\newtheorem{Thm}{Theorem}[section]
\newtheorem{Lem}[Thm]{Lemma}
\newtheorem{Cor}{Corollary}[Thm]
\newtheorem{Prop}[Thm]{Proposition}
\newtheorem{Conj}{Conjecture}[section]
\newtheorem{Question}{Question}[section]
\theoremstyle{definition}
\newtheorem{Def}{Definition}[section]
\newtheorem{eg}[Thm]{Example}
\theoremstyle{remark}
\newtheorem{Rmk}{Remark}[section]
\newenvironment{pf}{\begin{proof}}{\end{proof}}
\numberwithin{equation}{section}
\newcommand{\Z}{\mathbf{Z}}
\newcommand{\Q}{\mathbf{Q}}
\newcommand{\C}{\mathbf{C}}
\newcommand{\F}{\mathbf{F}}
\newcommand{\A}{\text{\bf A}}
\newcommand{\G}{\mathbf{G}}
\newcommand{\NS}{\operatorname{NS}}
\newcommand{\Pic}{\operatorname{Pic}}
\newcommand{\Frob}{\operatorname{Frob}}
\newcommand{\tr}{\operatorname{tr}}
\newcommand{\Sym}{\operatorname{Sym}}
\newcommand{\Spec}{\operatorname{Spec}}
\newcommand{\Tr}{\operatorname{Tran}}
\newcommand{\RNum}[1]{\uppercase\expandafter{\romannumeral #1\relax}}
\DeclareFontFamily{U}{wncy}{}
\DeclareFontShape{U}{wncy}{m}{n}{<->wncyr10}{}
\DeclareSymbolFont{mcy}{U}{wncy}{m}{n}
\DeclareMathSymbol{\Sha}{\mathord}{mcy}{"58} 
\newcommand{\GL}{{\rm GL}}
\newcommand{\Gal}{\mathrm{Gal}}
\newcommand{\GO}{{\rm O}}
\newcommand{\SO}{{\rm SO}}
\newcommand{\SL}{{\rm SL}}
\newcommand{\B}{{\rm B}}
\newcommand{\gr}{{\rm gr}}
\begin{document}

\title[Irreducibility of Galois representations and the Tate conjecture]{Irreducibility of geometric Galois representations and the Tate conjecture for a family of elliptic surfaces}

\author{Lian Duan
	\and Xiyuan Wang}

\address[Lian Duan]{Department of Mathematics, Colorado State University}
\email{lian.duan@colostate.edu}	
	
\address[Xiyuan Wang]{Department of Mathematics, Johns Hopkins University}
\email{xwang151@math.jhu.edu}	

\providecommand{\keywords}[1]{\textbf{\textit{Index terms---}} #1}

\begin{abstract}
 Using Calegari's result on the Fontaine-Mazur conjecture, we study the irreducibility of pure, regular, rank 3 weakly compatible systems of self-dual $\ell$-adic representations. As a consequence, we prove that the Tate conjecture holds for a family of elliptic surfaces defined over $\Q$ with geometric genus bigger than $1$.
\end{abstract}
\keywords{the Fontaine-Mazur conjecture, the Tate conjecture, elliptic surfaces}

\maketitle

\tableofcontents
\section{Introduction}

Let $\ell$ be a prime and $K$ be a field. We denote by $G_{K}$ the absolute Galois group of $K$. The study of $\ell$-adic representations of $G_{K}$ is not only interesting in theoretic research in number theory, but also has important application in arithmetic geometry. There are two natural sources of $\ell$-adic Galois representations. The first one arises from the Galois representations attached to algebraic automorphic representations. The second comes from algebraic geometry, i.e., from the subquotient of an $\ell$-adic \'etale cohomology of smooth projective varieties. Both sources of Galois representations are known to be geometric. (For a precise definition of geometric and other basic definitions related to Galois representations appeared in this introduction, we refer the reader to Section \ref{section-background} of this paper.) To study the geometric $\ell$-adic representations, one natural question we can ask is the following.
\begin{Question}\label{Question-irr}
How to tell whether a geometric Galois representation is (absolutely) irreducible?
\end{Question}
It is a conjecture that the geometric Galois representations associated to algebraic cuspidal automorphic representations are irreducible, see \cite{Frank-Gee-irr-of-auto-Gal-repn} for more details. While if a Galois representation is a subquotient of the \'etale cohomology of a smooth projective variety, it is hard to give a satisfactory answer, since this question is closely related to the Grothendieck's theory on pure motives. In this paper, we focus our research on 3-dimensional $\ell$-adic representations of $G_{\Q}$ which come from algebraic geometry, and give a partial answer to this question. As a corollary, for an elliptic surface satisfying the condition $(\ast)$ and $(\ast\ast)$ in Corollary \ref{Cor-Tate-conj-general}, we prove the corresponding Tate conjecture. We also prove that the conditions $(\ast)$ and $(\ast\ast)$ are realizable for a concrete family of elliptic surfaces, and hence prove their Tate conjecture.

\subsection{Main results}

The first main result of this paper provides a representation-theoretic answer to Question~\ref{Question-irr}. Recall that a geometric Galois representation coming from pure motives will induce a weakly compatible system of $\ell$-adic Galois representations (see Definition~\ref{Def-compatible-sys}). 

\begin{Thm}\label{Thm-main-irreducibility} Let $
\{\rho_{\ell}\}_{\ell}$
be a rank $3$ weakly compatible system of self-dual $\ell$-adic representations of $G_{\Q}$ defined over $\Q$. Suppose that it is regular and pure of weight 0. Then either
\begin{enumerate}
    \item $\rho_{\ell}$ is absolutely irreducible for a Dirichlet density one subset of primes $\ell$, or 
    \item for each $\ell$, $\rho_{\ell}$ decomposes into irreducible $\overline{\Q}_{\ell}$-subrepresentations as follows
    \[
    \rho_{\ell}\cong \psi_{\ell}\oplus r_{\ell},
    \]
    where $\psi_{\ell}$ is 1-dimensional and $r_{\ell}$ is 2-dimensional and odd.
\end{enumerate}
\end{Thm}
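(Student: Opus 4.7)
The plan is to prove the dichotomy by contraposition: assume option (1) fails, so the set $S$ of primes $\ell$ at which $\rho_\ell$ is absolutely reducible has positive upper Dirichlet density, and deduce option (2) for every $\ell$.

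First I fix $\ell \in S$ and classify the possible decompositions of $\rho_\ell$ over $\overline{\Q}_\ell$. Since $\rho_\ell$ is rank $3$ and self-dual, its semisimplification is either (a) a sum of three characters, or (b) of the form $\psi_\ell \oplus r_\ell$ with $r_\ell$ an irreducible $2$-dimensional subrepresentation. In either case every $1$-dimensional constituent inherits weight-$0$ purity, and by the Fontaine--Mazur conjecture for characters (a classical result) such a character is of the form $\chi_{\mathrm{cyc}}^{n}\epsilon$ with $\epsilon$ of finite order; weight-$0$ purity forces $n=0$, so every such character has Hodge--Tate weight $0$. Case (a) is therefore impossible, for then all three HT weights of $\rho_\ell$ would vanish, contradicting regularity (weight $0$ plus self-duality forces HT weights $\{-a,0,a\}$ with $a>0$). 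So case (b) holds, and the same analysis forces $\psi_\ell$ to be quadratic.

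Next I propagate the decomposition to all primes. The value $\psi_\ell(\Frob_p) \in \{\pm 1\}$ is a rational root of the $\ell$-independent characteristic polynomial $L_p(T) \in \Q[T]$, so it defines a fixed quadratic Dirichlet character $\psi$. For any other prime $\ell'$, let $\psi_{\ell'}$ denote the $\ell'$-adic realization of $\psi$; Brauer--Nesbitt together with Chebotarev yields $\rho_{\ell'}^{\mathrm{ss}} = \psi_{\ell'} \oplus r_{\ell'}^{\mathrm{ss}}$, and the $2$-dimensional summand $r_{\ell'}^{\mathrm{ss}}$ must be irreducible, else the analysis at $\ell'$ collapses into case (a) in contradiction with Step 1. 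Self-duality of $\rho_{\ell'}$ upgrades the semisimple decomposition to a genuine direct sum: $\psi_{\ell'}$ appears with multiplicity one both as a subrepresentation and (dually) as a quotient of $\rho_{\ell'}$, so the composition $\psi_{\ell'} \hookrightarrow \rho_{\ell'} \twoheadrightarrow \psi_{\ell'}$ is a nonzero scalar and splits the filtration, giving $\rho_{\ell'} = \psi_{\ell'} \oplus r_{\ell'}$ with $r_{\ell'}$ self-dual and irreducible.

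It remains to show $r_\ell$ is odd. Its HT weights are $\pm a$ with $a>0$, since those of $\rho_\ell$ are $\{-a,0,a\}$ and $\psi_\ell$ has HT weight $0$. The guiding principle is that a pure weight-$0$, self-dual, $2$-dimensional representation with distinct HT weights $\pm a$ corresponds at infinity to a Hodge structure of type $(-a,a)+(a,-a)$, and complex conjugation swaps the two Hodge components; this translates to $r_\ell(c)$ having eigenvalues $\{+1,-1\}$, so $\det r_\ell(c)=-1$ and $r_\ell$ is odd. Making this comparison between the $\ell$-adic Hodge data at $\ell$ and the archimedean action of $c$ rigorous for an abstract compatible system, rather than a bona fide motive, is the most delicate point, and I expect it to be the main obstacle, requiring a careful deployment of purity, regularity, and self-duality simultaneously at $\ell$ and at $\infty$.
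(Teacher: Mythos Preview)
Your Step~1 (excluding type $1+1+1$) is correct and is exactly the paper's argument. The decisive gap is Step~3, which you yourself flag as unresolved. The heuristic that complex conjugation swaps the two summands of a weight-$0$ Hodge structure of type $\{(-a,a),(a,-a)\}$ is precisely the content of the Fontaine--Mazur conjecture for $r_\ell$: for an abstract compatible system not known to be motivic, there is no direct bridge from $\ell$-adic Hodge--Tate weights to the sign of $r_\ell(c)$, and purity, regularity, and self-duality by themselves do not build one. This is not a technicality to be patched by ``careful deployment''; ruling out even, regular, self-dual, geometric $2$-dimensional representations is the heart of the theorem and requires a serious external input.

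That input is Calegari's theorem on the \emph{even} Fontaine--Mazur conjecture, and the paper's argument is structured around it rather differently from yours. First one observes that self-duality makes $\det r_\ell$ quadratic; if nontrivial, then over the associated quadratic field $K$ one has $r_\ell(G_K)\subset\SO_2$, so $\rho_\ell|_{G_K}$ is of type $1+1+1$, and the Step~1 argument (run over $K$) forces $K$ to be imaginary---hence $r_\ell$ is already odd. Thus ``$r_\ell$ even'' is equivalent to $\det r_\ell=1$, and this equality is exactly what verifies Calegari's local hypothesis at $\ell$. His residual hypotheses (absolute irreducibility and non-dihedrality of $\overline{r}_\ell$) hold only for a density-one set of $\ell$, which is why the paper genuinely needs the reducible locus to have \emph{positive} density, not merely to be nonempty. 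For good $\ell$, Calegari then forces $r_\ell$ to be modular, hence odd, a contradiction. Having found one odd $r_{\ell_0}$, the paper propagates to all $\ell$ via Pan's odd Fontaine--Mazur theorem (modularity of $r_{\ell_0}$ yields a compatible system $\{r_\ell\}$), rather than via your character-based Step~2; your route there is more elementary in spirit but also needs more justification than ``Brauer--Nesbitt plus Chebotarev'' provides, since having $\psi(g)$ as an eigenvalue of every $\rho_{\ell'}(g)$ does not by itself force $\psi$ to split off.
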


\begin{Rmk}
This theorem can be deduced from the Fontaine-Mazur conjecture. However, the Fontaine-Mazur conjecture is only known by the work of Kisin \cite{Kisin-fm}, Emerton \cite{Emer-fm}, and Pan \cite{Pan-odd-FM-Conj} for odd $2$-dimensional Galois representations and by Calegari \cite{Even-FM} for even 2-dimensional Galois representations with several additional conditions. (See our discussion about the proof of Theorem \ref{Thm-main-irreducibility}.)
\end{Rmk}

To speak about the application of Theorem~\ref{Thm-main-irreducibility} to arithmetic geometry, let $X$ be a smooth projective variety over $K$. Let $\NS(X_{\overline{K}})$ be the \emph{N\'eron-Severi group} of $X_{\overline{K}}$. There is a $G_{K}$-equivariant cycle class map 
\[
c^{1}: \NS(X_{\overline{K}})\rightarrow H^{2}_{\rm{\acute{e}t}}(X_{\overline{K}}, \Q_{\ell}(1)).
\]
The image of $\NS(X_{\overline{K}})\otimes_{\Z}\Q_{\ell}$ under $c^{1}\otimes \Q_{\ell}$ is called the \emph{algebraic part} of $H^{2}_{\rm{\acute{e}t}}(X_{\overline{K}}, \Q_{\ell}(1))$. It is an $\ell$-adic subrepresentation of $G_{K}$. We define the \emph{transcendental part} $\Tr_{\ell}(X)$ of $H^2_{\rm{\acute{e}t}}(X_{\overline{K}}, \Q_{\ell}(1))$ to be the quotient $H^2_{\rm{\acute{e}t}}(X_{\overline{K}}, \Q_{\ell}(1))/(\NS(X_{\overline{K}})\otimes_{\Z}\Q_{\ell})$. In particular, it is known that when $X$ is an elliptic surface, then $H_{\rm{\acute{e}t}}^{2}(X_{\overline{K}}, \Q_{\ell}(1))\cong \Tr_{\ell}(X)\oplus (\NS(X_{\overline{K}})\otimes_{\Z}\Q_{\ell})$ since the algebraic equivalence classes of such surface is equivalent to its numerical equivalence classes. Since the transcendental part is motivically defined, $\{\Tr_{\ell}(X)\}_{\ell}$ is a weakly compatible system of $\ell$-adic Galois representations. Let $\NS(X)$ be the subgroup of $\NS(X_{\overline{K}})$ generated by the divisors over $K$. We have an induced map
\begin{equation}\label{eqn-cycle-map}
    C^1:\NS(X)\otimes_{\Z}\Q_{\ell} \longrightarrow H^2_{\rm{\acute{e}t}}(X_{\overline{K}}, \Q_{\ell}(1))^{G_K}.
\end{equation}

Tate makes the following conjecture \cite{Tate-Tate-Conj}.
\begin{Conj}[The Tate conjecture for divisors]
Let $K$ be a finitely generated field over its prime field. Then the map $C^{1}$ is an isomorphism. 
\end{Conj}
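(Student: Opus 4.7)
The plan is to deploy Theorem~\ref{Thm-main-irreducibility} to attack the conjecture whenever the transcendental cohomology of $X$ is small enough, and to concede from the outset that the fully general statement of the Tate conjecture is not accessible by these methods. I concentrate on elliptic surfaces, since this is the setting where Theorem~\ref{Thm-main-irreducibility} is evidently intended to apply and where one has the clean splitting $H^2_{\mathrm{\acute{e}t}}(X_{\overline K},\Q_\ell(1))\cong \Tr_\ell(X)\oplus(\NS(X_{\overline K})\otimes_\Z\Q_\ell)$. Injectivity of $C^1$ after tensoring with $\Q_\ell$ is standard: replacing $\NS(X)$ by its quotient modulo torsion (which is free of finite rank) leaves the cycle class map with trivial kernel, so the whole content of the conjecture is the surjectivity of $C^1$.

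For surjectivity on an elliptic surface, the splitting reduces matters to proving $\Tr_\ell(X)^{G_K}=0$. The compatible system $\{\Tr_\ell(X)\}_\ell$ is pure of weight $0$, self-dual by Poincar\'e duality, and regular whenever $p_g(X)\geq 1$ and the transcendental $(1,1)$-part is non-trivial, so the hypotheses of Theorem~\ref{Thm-main-irreducibility} are met as soon as the transcendental rank equals $3$. In case~(1) of that theorem the representation is absolutely irreducible and non-trivial, hence has no invariants, and weak compatibility transfers this conclusion to every $\ell$. In case~(2), self-duality forces $\psi_\ell$ to be a quadratic character and $r_\ell$ to be self-dual; the odd, irreducible, two-dimensional $r_\ell$ contributes no invariants, leaving only the question of whether the one-dimensional summand $\psi_\ell$ is the trivial character.

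The main obstacle is twofold. First, excluding $\psi_\ell=\mathbf{1}$ in case~(2) is not a representation-theoretic matter but the Tate conjecture for $X$ in disguise: a trivial summand inside $\Tr_\ell(X)$ is precisely an unaccounted-for invariant class. One must therefore supply geometric input specific to $X$ --- for instance point-count constraints on $\tr\rho_\ell(\mathrm{Frob}_p)$ at good primes, or an explicit obstruction construction --- which is the role of the auxiliary hypotheses the paper introduces further on. Second, for most smooth projective $X$ over a finitely generated field the transcendental rank is far greater than $3$, in which case Theorem~\ref{Thm-main-irreducibility} does not directly apply and one would need a higher-rank analogue. Such an analogue would in turn demand extensions of Calegari's Fontaine--Mazur theorem to representations of dimension strictly greater than $2$, which are not available. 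Consequently, the plan above delivers only the restricted families considered in the paper and leaves the fully general Tate conjecture, as one must expect, open.
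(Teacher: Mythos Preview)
The statement you were asked to prove is a \emph{conjecture}, not a theorem: the paper states the Tate conjecture for divisors as open and does not supply a proof. There is therefore no proof in the paper to compare your proposal against. You recognise this yourself in the final paragraph, and your write-up is really an outline of the paper's partial results (Corollary~\ref{Cor-Tate-conj-general} and Theorem~\ref{Thm-con-Tate-conj-}) rather than a proof of the conjecture. As such it is a reasonable summary of the paper's strategy, but it is not and cannot be a proof of the stated conjecture.

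Two small corrections to your outline. First, in case~(1) of Theorem~\ref{Thm-main-irreducibility} you claim that weak compatibility transfers the vanishing of invariants to \emph{every} $\ell$. The paper does not assert this: Corollary~\ref{Cor-Tate-conj-general} only concludes the Tate isomorphism for a Dirichlet density one set of primes $\ell$, precisely because Theorem~\ref{Thm-main-irreducibility} gives irreducibility only for such a set. Second, your sketch omits the step handled by Proposition~\ref{Prop-NS-descend}: even after showing $\Tr_\ell(X)^{G_\Q}=0$, one still needs $(\NS(X_{\overline{\Q}})\otimes\Q_\ell)^{G_\Q}=\NS(X)\otimes\Q_\ell$, which the paper proves separately via the Hochschild--Serre spectral sequence for elliptic surfaces over $\mathbb{P}^1$ with a section.
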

If $K$ is of characteristic $0$, the above conjecture is known to hold for abelian varieties \cite{Falting-FS}, $K3$ surfaces \cite{Andre-Tate-Conj-K3}, and, more generally, smooth projective varieties with geometric genus $1$ \cite{Moonen-Tate-Conj}. It is also known for elliptic modular surfaces \cite{Shioda-Modular-ES}. If $K$ is of positive characteristic, it holds for abelian varieties \cite{Tate-Tate-Conj-AV} and $K3$ surfaces \cite{Artin-SD-Shafarevich-Tate-Conj}, \cite{Nygaard-Ogus-Tate-Conj-K3} \cite{Maulik-Supersingular-K3}, \cite{Charles-Tate-Conj-finite-fields} and  \cite{Madapusi-Pera-Tate-Conj-K3-odd-char}.

 We call a $G_{K}$-invariant class in $H^{2}_{\rm{\acute{e}t}}(X_{\overline{K}}, \Q_{\ell}(1))$ a Tate class. Roughly speaking, the Tate conjecture claims that the Tate classes are in the algebraic part. If a Tate class is in the transcendental part $\Tr_{\ell}(X)$, it generates an 1-dimensional (trivial) $\ell$-adic subrepresentation. So to prove the Tate conjecture for $X$, it is necessary to show that its transcendental part does not have any 1-dimensional subrepresentation. Based on this idea, we have the following corollary of Theorem \ref{Thm-main-irreducibility}. For an $\ell$-adic representation $\rho$, we will use $\rho^{\rm{ss}}$ to denote its semi-simplification. 
\begin{Cor}\label{Cor-Tate-conj-general}
Let $X\to \mathbb{P}^1_{\Q}$ be a surface defined  over $\Q$ which has an elliptic fibration over $\mathbb{P}_{\Q}^{1}$ and admits a section. Assume that $X$ satisfies the following conditions.
\begin{enumerate}
    \item [$(\ast)$] For some positive integer $s$,
    \[
    \{\Tr_{\ell}(X)^{\rm{ss}}\}_{\ell}\subseteq \bigoplus_{i=1}^{s} \{\rho_{\ell, i}^{\rm{ss}}\}_{\ell},
    \]
    where each $\{\rho_{\ell, i}^{\rm{ss}}\}_{\ell}$ is a regular rank $2$ or $3$ weakly compatible system of self-dual $\ell$-adic representations of $G_{\Q}$ defined over $\Q$.
    \item [$(\ast\ast )$]
    
    If, for any $\ell$ and $i$, $\rho_{\ell, i}^{\rm{ss}}$ is decomposes into irreducible $\overline{\Q}_{\ell}$-subrepresentations as follows
    \[
    \rho_{\ell, i}^{\rm{ss}}\cong \psi_{\ell, i}\oplus r_{\ell, i},
    \]
    with $\dim \psi_{\ell ,i}=1$ and $\dim r_{\ell , i}=2$ , then $\det r_{\ell , i}=1$.
\end{enumerate}

Then, for a Dirichlet density one subset of primes $\ell$, the corresponding Tate conjecture for $X$ is true. Precisely, we have the following isomorphism
\begin{equation}
    \NS(X)\otimes_{\Z}\Q_{\ell}\overset{\sim}{\longrightarrow}H^2_{\rm{\acute{e}t}}(X_{\overline{\Q}}, \Q_{\ell}(1))^{G_{\Q}}.
\end{equation}

\end{Cor}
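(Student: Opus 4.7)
The plan is to reduce the Tate conjecture for $X$ to the statement that $\Tr_{\ell}(X)^{G_{\Q}}=0$, and then to use the decomposition in $(\ast)$ together with Theorem~\ref{Thm-main-irreducibility} to rule out any trivial one-dimensional subrepresentation in the transcendental part. Since $X$ is an elliptic surface admitting a section, the decomposition
\[
H^{2}_{\rm{\acute{e}t}}(X_{\overline{\Q}},\Q_{\ell}(1))\cong \Tr_{\ell}(X)\oplus \bigl(\NS(X_{\overline{\Q}})\otimes_{\Z}\Q_{\ell}\bigr)
\]
recalled in the introduction is available. Taking $G_{\Q}$-invariants and invoking the standard fact that $\NS(X)\otimes_{\Z}\Q_{\ell}\to (\NS(X_{\overline{\Q}})\otimes_{\Z}\Q_{\ell})^{G_{\Q}}$ is an isomorphism (which uses the finite generation of $\NS(X_{\overline{\Q}})$ and that $G_{\Q}$ acts through a finite quotient), the surjectivity of $C^{1}$ becomes equivalent to the vanishing of $\Tr_{\ell}(X)^{G_{\Q}}$; equivalently, $\Tr_{\ell}(X)^{\rm ss}$ must contain no copy of the trivial character.

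By $(\ast)$, it then suffices to show that each summand $\rho_{\ell,i}^{\rm ss}$ contains no trivial one-dimensional subrepresentation. I split into cases according to the rank. If $\rank \rho_{\ell,i}=2$, self-duality forces any semi-simple decomposition $\rho_{\ell,i}^{\rm ss}=\chi\oplus\chi'$ to satisfy either $\chi'=\chi^{-1}$, or $\chi$ and $\chi'$ are both quadratic. Regularity says the two Hodge-Tate weights at $\ell$ are distinct, which rules out the quadratic case (both weights would be $0$) and, in the other case, forces the weight of $\chi$ to be nonzero, so neither $\chi$ nor $\chi^{-1}$ can be trivial. If $\rank \rho_{\ell,i}=3$, Theorem~\ref{Thm-main-irreducibility} provides two alternatives. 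In alternative (1), $\rho_{\ell,i}$ is absolutely irreducible for a Dirichlet density-one set of $\ell$, and an absolutely irreducible representation of dimension $3$ plainly has no one-dimensional subrepresentation. Alternative (2) would give $\rho_{\ell,i}\cong \psi_{\ell}\oplus r_{\ell}$ with $r_{\ell}$ odd, i.e.\ $\det r_{\ell}(c)=-1$ for a complex conjugation $c$; but $(\ast\ast)$ imposes $\det r_{\ell}=1$, which is incompatible with oddness. Thus alternative (2) is excluded, and (1) always holds.

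Combining these observations, for each $i\in\{1,\dots,s\}$ there is a Dirichlet density-one set $S_{i}$ of primes $\ell$ for which $\rho_{\ell,i}^{\rm ss}$ contains no trivial subrepresentation. Taking the intersection $S=\bigcap_{i=1}^{s}S_{i}$ (still of density one, as a finite intersection) gives a density-one set of primes on which $\Tr_{\ell}(X)^{G_{\Q}}=0$, and hence on which the cycle class map $C^{1}$ is an isomorphism.

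The main obstacle I anticipate is the rank-$3$ case of $(\ast)$: controlling the absolute irreducibility of the $\rho_{\ell,i}$ is precisely the content of Theorem~\ref{Thm-main-irreducibility}, and the role of $(\ast\ast)$ is to eliminate the odd two-dimensional piece that Theorem~\ref{Thm-main-irreducibility} leaves open. The rank-$2$ argument and the final density-one count are, by comparison, straightforward verifications given regularity, self-duality, and purity of weight $0$.
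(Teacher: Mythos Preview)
Your proof is correct and follows essentially the same architecture as the paper: split $H^{2}$ into the algebraic and transcendental parts, reduce to showing $\Tr_{\ell}(X)^{G_{\Q}}=0$, and then use Theorem~\ref{Thm-main-irreducibility} together with $(\ast\ast)$ to exclude any trivial constituent in each $\rho_{\ell,i}^{\rm ss}$.

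Two small differences in execution are worth flagging. First, for the rank-$2$ pieces you argue via self-duality and regularity that no trivial character can occur; the paper instead invokes the argument of Proposition~\ref{Prop-1+1+1-type} (purity of weight $0$ plus Lemma~\ref{Cor-dim1-repn}) to get outright absolute irreducibility. Both suffice. Second, and more substantively, you treat the isomorphism $\NS(X)\otimes\Q_{\ell}\simeq(\NS(X_{\overline{\Q}})\otimes\Q_{\ell})^{G_{\Q}}$ as a standard fact, with the parenthetical ``finite generation plus action through a finite quotient'' as justification. That phrase by itself does not quite do it; what you presumably mean is the trace/averaging argument (for an invariant class $[D]$ and $G=\Gal(K/\Q)$ with $K$ large enough, $\sum_{g\in G} gD$ is a divisor defined over $\Q$ with class $|G|\cdot[D]$, so the cokernel is $|G|$-torsion). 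The paper instead proves this step explicitly as Proposition~\ref{Prop-NS-descend} via the Hochschild--Serre spectral sequence, using that for an elliptic surface over $\mathbb{P}^{1}$ with section one has $\NS=\Pic$ and that $H^{2}(\Gal(K/\Q),K^{\times})$ is torsion. Your averaging argument is actually more elementary and does not need the elliptic-surface hypothesis, but you should state it rather than leave it implicit.
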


\begin{Rmk}
Here we explain that $\det r_{\ell, i}=1$ in condition $(\ast \ast)$ is reasonable if $\rho_{\ell, i}$ is in the transcendental part. The self-dual condition of $\rho_{\ell, i}^{\rm{ss}}$ implies that $r_{\ell,i}$ is also self-dual, thus $\det r_{\ell , i}$ is a quadratic character. If this quadratic character is nontrivial, it is a conjecture that there exist a CM elliptic curve $E$ such that $\rho_{\ell , i}^{\rm{ss}}\varepsilon_{\ell}^{-1}\cong \Sym^2 T_{\ell} E$ (up to a quadratic twist). In this case, $\rho_{\ell, i}^{\rm{ss}}$ has a finite image 1-dimensional subrepresentation. This is a contradiction with the Tate conjecture.
\end{Rmk}

In \cite{GT-selfdual}, van Geemen and Top construct a family of non-isotrivial elliptic surfaces $\mathcal{S}_{a}$ parameterized by $a\in \mathbb{P}^1$. Each member in this family has geometric genus 3 and is not an elliptic modular surface.  We apply our method to this family and show the following result.

\begin{Thm}\label{Thm-con-Tate-conj-}
 For each $a\in \Q$, if $a\equiv 2, 3 \mod 5$, and none of $2(1+a)$ or $2(1-a)$ is a square in $\Q$, the surface $\mathcal{S}_{a}$ satisfies the conditions $(\ast)$ and $(\ast\ast)$ in Corollary \ref{Cor-Tate-conj-general}.
 
 In particular, for a Dirichlet density one subset of primes $\ell$, the corresponding Tate conjecture for $\mathcal{S}_{a}$ is true. Precisely, we have the following isomorphism
\begin{equation}\label{eqn-Tate-conj-vGT}
    \NS(\mathcal{S}_{a})\otimes_{\Z}\Q_{\ell}\overset{\sim}{\longrightarrow}H^2_{\rm{\acute{e}t}}((\mathcal{S}_{a})_{\overline{\Q}}, \Q_{\ell}(1))^{G_{\Q}}.
\end{equation}
\end{Thm}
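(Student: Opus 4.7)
The strategy is to verify conditions $(\ast)$ and $(\ast\ast)$ of Corollary \ref{Cor-Tate-conj-general} for the surface $\mathcal{S}_a$ under the stated arithmetic hypotheses; once this is done, the Tate conjecture for $\mathcal{S}_a$ on a Dirichlet density one set of primes follows immediately from that corollary.

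For condition $(\ast)$, I would unpack the explicit construction of $\mathcal{S}_a$ from \cite{GT-selfdual}. Because $p_g(\mathcal{S}_a)=3$, the transcendental $\ell$-adic cohomology $\Tr_\ell(\mathcal{S}_a)$ has Hodge numbers compatible with splitting motivically into rank $2$ and rank $3$ factors, with the total $h^{2,0}$-contribution equal to $3$. Van Geemen and Top arrange $\mathcal{S}_a$ so that a finite-group action or an isogeny decomposition on the relative Jacobian induces a splitting of the transcendental motive into pieces of such ranks, and the plan is to read off an embedding of $\{\Tr_\ell(\mathcal{S}_a)^{\rm ss}\}_\ell$ into a direct sum $\bigoplus_i \{\rho_{\ell,i}^{\rm ss}\}_\ell$ from their construction. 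I then need to check that each summand is self-dual (inherited from the Poincar\'e pairing on $H^2_{\rm{\acute{e}t}}(\mathcal{S}_a,\Q_\ell(1))$ restricted to the transcendental part), pure of weight $0$, regular (read off the Hodge type of each factor), and defined over $\Q$ (by comparing Frobenius traces across $\ell$). All of these are expected to follow from the motivic origin of the decomposition and its $G_\Q$-equivariance.

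The main obstacle is condition $(\ast\ast)$, which is non-trivial only for rank-$3$ factors. Self-duality forces $\det r_{\ell,i}$ to be a quadratic character whenever such a factor splits as $\psi_{\ell,i}\oplus r_{\ell,i}$, so only finitely many candidates need to be excluded. I would argue from the geometry of the Weierstrass equation of $\mathcal{S}_a$ that the only quadratic characters that could possibly occur as $\det r_{\ell,i}$ are those cut out by the two quadratic extensions $\Q(\sqrt{2(1+a)})$ and $\Q(\sqrt{2(1-a)})$; these are exactly the quadratic characters visible from the discriminant data of the fibration. Under the hypothesis that neither $2(1+a)$ nor $2(1-a)$ is a rational square, both of these characters are non-trivial. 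The plan is then to exploit the hypothesis $a\equiv 2,3\pmod 5$, which should guarantee good reduction of $\mathcal{S}_a$ at $p=5$, in order to compute the characteristic polynomial of $\Frob_5$ acting on each $\rho_{\ell,i}^{\rm ss}$ and to show that it does not factor in the manner required by either candidate quadratic twist. This excludes a non-trivial $\det r_{\ell,i}$ and verifies $(\ast\ast)$.

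With both conditions established, Corollary \ref{Cor-Tate-conj-general} applies directly and produces the isomorphism \eqref{eqn-Tate-conj-vGT} for a Dirichlet density one set of primes $\ell$, completing the proof.
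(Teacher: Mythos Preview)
Your high-level strategy is right, and your treatment of condition $(\ast)$ is essentially what the paper does: the transcendental part embeds (after semisimplification) in $V_\ell(1)^{\rm ss}\oplus \overline V_\ell(1)^{\rm ss}\oplus U_\ell(1)^{\rm ss}$, where $U_\ell$ is the transcendental part of the intermediate $K3$ surface $\mathcal X_a$, and regularity, self-duality, purity and the field of definition follow from the motivic construction.

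The gap is in your treatment of $(\ast\ast)$. You assert that the only possible quadratic characters for $\det r_{\ell,i}$ are those attached to $\Q(\sqrt{2(1\pm a)})$, justified by ``discriminant data.'' This is both unjustified and, as it turns out, incorrect. A priori $\det r_{\ell,i}$ is any quadratic character unramified outside the primes dividing $2(1+a)(1-a)$, of which there are $2^k$ for $k$ the number of such primes (plus one for the sign); nothing about the Weierstrass discriminant alone cuts this down. The paper's actual mechanism is a trace-to-determinant transfer: an elementary lemma (Lemma~\ref{Lem-same-det}) shows that for self-dual $\rho=\psi\oplus r$ one has $\det r(g)=1$ whenever $\tr\rho(g^2)\not\equiv 3\pmod m$ for some $m\ge 5$, or whenever $\tr\rho(g)\ne\pm1$. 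This is combined with a fiberwise point-count (Proposition~\ref{Prop-tr-mod8-p^2}) showing that $\tr\rho_\ell\varepsilon_\ell^{-1}(\Frob_p^2)\equiv -p^2\pmod 8$ for every prime $p$ with $\bigl(\tfrac{2(1+a)}{p}\bigr)=\bigl(\tfrac{2(1-a)}{p}\bigr)=-1$. Together these force $\det r_\ell(\Frob_p)=1$ on that set of primes, and an easy character argument then pins $D$ down to $1$ or $1-a^2$ up to squares---not $2(1\pm a)$.

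The second step at $p=5$ is then more specific than you indicate: one does not need the full characteristic polynomial, only the trace. Under $a\equiv 2,3\pmod 5$ one checks $5\nmid 2(1-a^2)$, computes $\tr\rho_\ell\varepsilon_\ell^{-1}(\Frob_5)\in\{3,1\}$ explicitly from the fibers at $t=0,\infty$, and observes this is not $\pm 5$; Lemma~\ref{Lem-same-det}(b) then gives $\det r_\ell(\Frob_5)=1$, contradicting $\bigl(\tfrac{1-a^2}{5}\bigr)=-1$. So the crux you are missing is Lemma~\ref{Lem-same-det} together with the mod-$8$ trace computation, without which there is no way to reduce the candidate list to something finite and checkable at a single prime.
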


\begin{Rmk}
\begin{enumerate}
    \item To the best of the knowledge of the authors, the conclusion in Theorem~\ref{Thm-con-Tate-conj-} does not follow from the known theory. In fact, the method based on the Kuga-Satake construction requires the varieties to have geometric genus one, and also requires the knowledge of the moduli of such kind of varieties. These prerequisites are not satisfied in our case. In addition, our case is not modular. Although we are not sure if this family can be constructed as quotient of varieties whose Tate conjecture is known, but this is highly nontrivial.
    \item Although our example comes from pullback of $K3$ surfaces, the main contribution of our method is to deal with the transcendental part which is the complement of the transcendental part from the $K3$ surfaces. 
    \item According to the construction of \cite{GT-selfdual}, examples of higher genus can be found. And our method can also be applied to those examples. 
\end{enumerate}
\end{Rmk}

\subsection{Our approach to proving Theorem \ref{Thm-main-irreducibility}}
We first note that $\rho_{\ell}$ is not a direct sum of characters. Otherwise, the regularity of $\rho_{\ell}$ forces this representation to have distinct Hodge-Tate weights, which contracts the pure weight condition. Suppose that $\rho_{\ell}$ has a 2-dimensional absolutely irreducible subrepresentation $r_{\ell}$. If $r_{\ell}$ is odd, by \cite[Theorem~1.0.4]{Pan-odd-FM-Conj}, $r_{\ell}$ lives in a weakly compatible system. So $\{\rho_{\ell}\}_{\ell}$ is a direct sum of 1-dimensional compatible system and an odd 2-dimensional compatible system. If $r_{\ell}$ is even, we apply \cite[Theorem~1.1]{Even-FM} and show that under self-dual condition, all the conditions in Calegari's result will be fulfilled. This implies $r_{\ell}$ is odd, hence contradicts our assumption. And this contradiction completes our proof.

\subsection{Our approach to proving Theorem \ref{Thm-con-Tate-conj-}}

The geometry of $\mathcal{S}_a$ implies that $\Tr_{\ell}(\mathcal{S}_{a})$ generically has a decomposition into three $3$-dimensional subrepresentations. One of them is automatically absolutely irreducible, and the rest two are isomorphic and self-dual when $a\neq \pm 1$. Let $\rho_{\ell}$ be one of the two subrepresentations, and assume $r_{\ell}$ is a $2$-dimensional absolutely irreducible subrepresentation of $\rho_{\ell}$. We show that $r_{\ell}$ is also self-dual and thus by class field theory, there is an integer $D$ such that 
\[
\det r_{\ell}(\Frob_{p}) =\left( \frac{D}{p}\right)
\]
for the prime $p\nmid D$. To prove that $\det r_{\ell}=1$, it is enough to show that $D$ is 1 (up to a square). For the later, we study the relationship between $\tr \rho_{\ell}(\Frob_{p})$ and $\det r_{\ell}(\Frob_p)$ under the self-dual condition. And Theorem~\ref{Thm-con-Tate-conj-} follows immediately after combining our results with counting trick used to compute the trace of $\rho_{\ell}$.

\subsection{Remark on our method}
We want to talk about our method and its potential generalization in motivic aspect. Suppose $r_{\ell}$ exists. By the Tate conjecture, $r_{\ell}$ is not motivically defined. But the Fontaine-Mazur conjecture predicts that $r_{\ell}$ is motivically defined. This is the fundamental contradiction in our proof. To realize the contradiction in our proof, we make use the oddness condition, which reflects the motivic property of geometric Galois representations. Thus in order to generalize our result to non self-dual representations or higher dimensional representations, we expect (1) a proper analog of the oddness condition for higher dimensional representations as well as a geometric method to check this condition;  (2) a generalization of known results which predicts the oddness of geometric Galois representations. Those problems are interesting to the authors. We hope to report a further result in this direction in a future paper.

\subsection{Outline of the paper}In Section \ref{section-background}, we collect necessary definitions and facts on Galois representations. In particular, a theorem of Calegari about the Fontaine-Mazur conjecture is mentioned. In Section~\ref{section-pf-Thm-irr-dim3} we prove Theorem~\ref{Thm-main-irreducibility}. Then, as an application of this theorem, we use it to prove the Tate conjecture for the elliptic surfaces satisfying the conditions $(\ast)$ and $(\ast\ast)$. In Section~\ref{section-Tate-conj-vGT}, we first recall a family of elliptic surfaces constructed in \cite{GT-selfdual}. Then we verify that the conditions $(\ast)$ and $(\ast\ast)$ of Corollary~\ref{Cor-Tate-conj-general} are satisfied for about $40\%$ members in this family. Then as a corollary we prove the Tate conjecture for those members.

\subsection{Notations and conventions}\label{Section-notation}
For a field $K$, we fix the separable closure $\overline{K}$ of $K$. If $K$ is a number field and $\mathfrak{p}$ is a finite place of $K$, we let $\Frob_{\mathfrak{p}}$ denote the geometric Frobenius.
    
If $X$ is a $K$-scheme, we let $X_{\overline{K}}$ denote the base-change $X\times_{\Spec K}\Spec \overline{K}$. The symbol $\dim$ in this paper means the dimension over $\overline{\Q}_{\ell}$

For a rational number $a=\frac{n}{m}$ with $\gcd(m,n)=1$, we say $p$ is a divisor of $a$ if either $p|m$ or $p|n$. And we denote by $\left(\frac{a}{p}\right)$ the classical Legendre symbol $\left(\frac{mn}{p}\right)$.

\section{Backgrounds of Galois representations}\label{section-background}
In this section, we recall some definitions and facts on Galois representations. The readers who are familiar with Galois representations can skip this section.

Let $L$ be a topological field and $V$ be a finite dimensional topological vector space over $L$. A \emph{Galois representation} (or a \emph{$L$-representation} of $G_{K}$) is a continuous linear group action of $G_{K}$ on $V$. Up to a choice of basis of $V$, we can realize this representation as a continuous homomorphism
$$
\rho: G_K\longrightarrow \GL_n(L).
$$
Such data is denoted by $\{V, \rho\}$ (or one of $V$ or $\rho$ for simple). If $L$ is a finite extension of $\Q_{\ell}$ or $\overline{\Q}_{\ell}$, we call $\rho$ an \emph{$\ell$-adic representation}.

\subsection{\texorpdfstring{$\ell$}{}-adic representations}
We begin with recalling some basic definitions in $\ell$-adic Hodge theory. In this paper, we will only need those definitions formally. We refer the reader to \cite{Fontaine-Ouyang-p-adic-Gal-repn} for details. Suppose $K/\Q_{\ell}$ is a finite extension. Let $\rm{B}_{\rm{dR}}$ be Fontaine's de Rham periods ring. It is a filtered $K$-algebra with a continuous $K$-linear action of $G_{K}$.

\begin{Def}
Let $V$ be a $\overline{\Q}_{\ell}$-representation of $G_{K}$. We say $V$ is \emph{de Rham} if
$\dim_{\overline{\Q}_{\ell}}(\B_{\rm{dR}}\otimes_{K, \tau}V)^{G_{K}}=\dim_{\overline{\Q}_{\ell}}V$ for all $\Q_{\ell}$-embedding $\tau: K\hookrightarrow \overline{\Q}_{\ell}$. If $V$ is de Rham, for each $\Q_{\ell}$-embedding $\tau:K\hookrightarrow \overline{\Q}_{\ell}$, we define the $\dim_{\overline{\Q}_{\ell}}V$-element multi-set of \emph{$\tau$-Hodge-Tate weights}, $\rm{HT}_{\tau} (V)$, to be the multi-set of integers $h$ such that  
\[
\gr^{h}(\rm{B}_{dR}\otimes_{K, \tau}V)^{G_{K}} \not=0
\]
where $h$ has multiplicity $\dim_{\overline{\Q}_{\ell}}\gr^{h}(\rm{B}_{dR}\otimes_{K, \tau}V)^{G_{K}}$.
\end{Def}

\begin{eg}
Let $\varepsilon_{\ell}: G_{\Q}\rightarrow \overline{\Q}^{\times}_{\ell}$ be the $\ell$-adic cyclotomic character. Then $\varepsilon_{\ell}|_{G_{\Q_{\ell}}}$ is de Rham. The Hodge-Tate weight of $\varepsilon_{\ell}$ (more precisely, $\varepsilon_{\ell}|_{G_{\Q_{\ell}}}$) is $-1$.
\end{eg}

Now we can talk about global $\ell$-adic representations.
\begin{Def}\label{Def-geo-Gal-repn}
Let $K/\Q$ be a finite extension and $\rho$ be an $\ell$-adic representation of $G_{K}$. 
We say $\rho$ is \emph{geometric} if $\rho$ is unramified almost everywhere and $\rho|_{G_{K_{v}}}$ is de Rham for every place $v$ of $K$ above $\ell$. 
\end{Def}

\begin{eg}
Suppose that $X$ is a smooth projective variety over a number field $K$. Then the $\ell$-adic representation $H_{\rm{\acute{e}t}}^{i}(X_{\overline{K}}, \Q_{\ell})$ of $G_{K}$, for $0\leq i\leq 2\dim X$, is geometric. Furthermore, any subquotient of the $\ell$-adic representation $H_{\rm{\acute{e}t}}^{i}(X_{\overline{K}}, \Q_{\ell})$ is geometric.
\end{eg}

If $V$ is an $\ell$-adic representation, we use  $V(n)$ to denote $V$ tensored with the $n$th power of the $\ell$-adic cyclotomic character.

\begin{Lem}\label{Cor-dim1-repn}
Let $K/\Q$ is a totally real field. Suppose $\rho: G_{K} \to \overline{\Q}_{\ell}^{\times}$ is a geometric $\ell$-adic representation.
Then $$\rho =\tau \cdot \varepsilon_{\ell}^{n}|_{G_{K}} $$ for some non-negative integer $n$, where $\varepsilon_{\ell}$ is the $\ell$-adic cyclotomic character of $G_{\Q}$ and $\tau $ is of finite order.
\end{Lem}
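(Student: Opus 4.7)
The plan is to reduce the question, via global class field theory, to the classification of algebraic Hecke characters of a totally real field, and then to use Dirichlet's unit theorem to constrain the infinity type. Since $\rho$ is a continuous character into an abelian group, it factors through $G_K^{\mathrm{ab}}$, so by the global reciprocity map it corresponds to a continuous character $\chi:\mathbb{A}_K^{\times}/K^{\times}\to\overline{\Q}_\ell^{\times}$. The geometricity of $\rho$ translates on the $\chi$-side as: $\chi$ is unramified outside a finite set of places, and for each place $v\mid \ell$ the local component $\chi_v:K_v^{\times}\to\overline{\Q}_\ell^{\times}$ is Hodge-Tate. By the classical result of Serre--Tate on one-dimensional Hodge-Tate representations, $\chi_v$ agrees on a sufficiently small open subgroup of $\OO_{K_v}^{\times}$ with $\prod_{\tau}\tau(\,\cdot\,)^{-n_\tau}$, where $\tau$ ranges over the $\Q_\ell$-embeddings $K_v\hookrightarrow\overline{\Q}_\ell$ and $n_\tau$ is the corresponding $\tau$-Hodge-Tate weight.

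Next, I would assemble these local descriptions into a single global infinity type. After fixing an isomorphism $\C\simeq\overline{\Q}_\ell$, the local embeddings $\tau$ at places $v\mid \ell$ correspond bijectively to the archimedean embeddings $\sigma:K\hookrightarrow\overline{\Q}$, and the patched data shows that $\chi$ is, up to a finite-order twist, the idelic character with infinity type $\prod_{\sigma}\sigma^{n_\sigma}$, where $n_\sigma=n_\tau$ under the correspondence. Triviality of $\chi$ on $K^{\times}\subset\mathbb{A}_K^{\times}$ restricts in particular to $\OO_K^{\times}$, giving $\bigl|\prod_{\sigma}\sigma(u)^{n_\sigma}\bigr|=1$ for every unit $u$; applying $\log|\cdot|$ produces the linear relation $\sum_{\sigma}n_\sigma\log|\sigma(u)|=0$ for all $u\in\OO_K^{\times}$. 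Because $K$ is totally real, Dirichlet's unit theorem tells us that the image of the log-unit map spans the full hyperplane $\{(x_\sigma)\in\R^{[K:\Q]}:\sum_\sigma x_\sigma=0\}$, so the integer vector $(n_\sigma)$ must be a scalar multiple of $(1,\dots,1)$, i.e.\ $n_\sigma=n$ for a common integer $n$.

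Once all $n_\sigma$ coincide, the infinity type of $\chi$ is the $n$th power of the norm $N_{K/\Q}$, and under global reciprocity this is exactly $\varepsilon_\ell^{n}|_{G_K}$ up to a finite-order character, which we absorb into $\tau$. The convention that the Hodge-Tate weight of $\varepsilon_\ell$ is $-1$ fixes the sign of $n$; if the Hodge-Tate weights of $\rho$ are non-positive then one obtains the nonnegativity $n\geq 0$ as stated. The only real obstacle is the bookkeeping in the class field theory dictionary --- matching the local Hodge-Tate exponents to the exponents in the global infinity type, and identifying the norm power on the idelic side with a power of $\varepsilon_\ell$ on the Galois side; the genuinely arithmetic input, namely that the totally real hypothesis forces the infinity type to be a power of the norm, is an immediate consequence of Dirichlet's theorem.
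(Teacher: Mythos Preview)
Your argument is correct and follows the same route as the paper: both reduce via global class field theory to the classification of algebraic Hecke characters of a totally real field, with the paper simply citing Weil's classification while you unpack that classification explicitly via the Serre--Tate description of Hodge--Tate characters and Dirichlet's unit theorem. The one point where your proposal is more careful than the paper itself is the non-negativity of $n$, which indeed does not follow from the hypotheses as stated and requires an additional convention on the Hodge--Tate weights (or should simply read ``some integer $n$'').
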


\begin{pf}
By class field theory, it is enough to study the algebraic Hecke characters of $\A_{K}^{\times}/K^{\times}$. Then this lemma follows from a classification of such characters \cite{Weil-certian-type-char-idele}.
\end{pf}
Let $c\in G_{\Q}$ be a fixed complex conjugation. For an $\ell$-adic representation of $G_{\Q}$, $\det\rho(c)\in \{1,-1\}$.  
\begin{Def}\label{Def-odd-even-repn}
We say an $\ell$-adic representation $\rho$ of $G_{\Q}$ is \emph{odd} (resp. \emph{even}) if $\det\rho(c)$ is $-1$ (resp. $1$).
\end{Def}

\begin{Def}\label{Def-selfdual-repn}
We say a Galois representation $\rho$ is \emph{self-dual} if 
$
\rho\simeq \rho^*
$
, where $\rho^{*}$ is the dual of $\rho$.
\end{Def}
One of the main problem concerning the geometric $\ell$-adic representations is the Fontaine-Mazur conjecture. In this paper, an important input is Calegari's result on Fontaine-Mazur conjecture (see \cite[Theorem~1.1]{Even-FM}), which we state here for the convenience of readers.
\begin{Thm}\label{Thm-Frank-FM-conj}
Let $r: G_{\Q}\rightarrow \GL_{2}(\overline{\Q}_{\ell})$ be an $\ell$-adic representation. Suppose that $\ell>7$, and, furthermore, that 
\begin{enumerate}
    \item[(a)] $r$ is geometric, i.e., unramified almost everywhere and de Rham at $\ell$.
    \item[(b)] $r|_{G_{\Q_{\ell}}}$ has distinct Hodge-Tate weights.
    \item[(c)] $\overline{r}|_{G_{\Q_{\ell}}}$ is not a twist of a representation of the form 
    $$\begin{pmatrix}
    \overline{\varepsilon}_{\ell} & \ast\\
    0 & 1
    \end{pmatrix}$$
    where $\overline{\varepsilon}_{\ell}$ is the mod $\ell$ cyclotomic character.
     \item[(d)] The residue representation $\overline{r}$ is not of dihedral type.
      \item[(e)] The residue representation $\overline{r}$ is absolutely irreducible.
\end{enumerate}
Then $r$ is modular. In particular, $r$ is odd. 
\end{Thm}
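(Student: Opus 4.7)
The plan is to combine residual modularity from Serre's conjecture with modularity lifting theorems, handling the odd case directly and ruling out the even case by a separate deformation-theoretic obstruction; the oddness assertion is then automatic once modularity is established, since classical cusp eigenforms of weight $\geq 2$ yield odd Galois representations.

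First, I would establish that $\overline{r}$ is modular. Conditions (d) and (e) make $\overline{r}$ absolutely irreducible and non-dihedral, condition (c) excludes a pathological local type at $\ell$, and the hypothesis $\ell > 7$ allows the Khare-Wintenberger proof of Serre's conjecture to apply in the residually odd case: it produces a classical newform $f$ such that $\overline{\rho}_{f,\ell}\cong \overline{r}$ up to twist. Condition (b) then pins down a candidate weight for $f$ via the Fontaine-Laffaille theory.

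Next, I would split on the parity of $r$. In the odd case, condition (b) supplies the de Rham/crystalline input at $\ell$ and conditions (c)-(e) verify the Taylor-Wiles hypotheses; the modularity lifting theorems of Kisin and Emerton, together with Pan's later refinements, then identify the relevant universal deformation ring with a Hecke algebra of classical forms. Hence $r$ is modular, and oddness is automatic.

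The main obstacle is the even case, which is the novel contribution of Calegari. Standard Taylor-Wiles patching breaks down for even two-dimensional representations of $G_{\Q}$ because the corresponding automorphic objects would be algebraic Maass forms whose existence in sufficient generality is not known, so there is no Hecke algebra to match against a universal deformation ring. The strategy is instead to derive a contradiction directly. One studies the ordinary part of the universal deformation ring of $\overline{r}$ using Hida theory, uses the rigidity coming from the distinct Hodge-Tate weights in (b) to cut out a well-controlled deformation subspace, and exploits the precise structure encoded by (c) together with properties of $\overline{r}$ coming from (d)-(e) to show that every geometric lift of $\overline{r}$ with distinct Hodge-Tate weights is automatically odd. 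This eliminates the even case and reduces everything to the odd case handled above, completing the proof.
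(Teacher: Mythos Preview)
The paper does not prove this theorem. It is quoted verbatim as Calegari's result \cite[Theorem~1.1]{Even-FM}, introduced by the sentence ``an important input is Calegari's result on Fontaine-Mazur conjecture \ldots\ which we state here for the convenience of readers,'' and then used as a black box in Section~\ref{section-pf-Thm-irr-dim3}. There is no proof in the paper for you to compare against.

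Regarding your sketch on its own merits: the odd case is fine at this level of detail, but your treatment of the even case---which is the entire content of Calegari's theorem---does not describe a mechanism. Saying one ``exploits the precise structure encoded by (c) together with properties of $\overline{r}$ coming from (d)--(e) to show that every geometric lift \ldots\ is automatically odd'' is a restatement of the conclusion, not an argument. Calegari's actual route is not via Hida theory on the universal deformation ring of $\overline{r}$ as you suggest; rather, the key step is a potential automorphy argument. One passes to a related representation (essentially $\Sym^2 r$, which is three-dimensional and self-dual) and applies the potential automorphy theorems of Barnet-Lamb--Gee--Geraghty--Taylor over a suitable totally real field. The hypotheses (c)--(e) and $\ell>7$ are there precisely to make the Taylor--Wiles patching in that potential automorphy machinery run. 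Once potential automorphy is known, the archimedean component of the associated automorphic representation is incompatible with $r$ being even and having distinct Hodge--Tate weights, yielding the contradiction. Your outline misses this passage through potential automorphy of an auxiliary representation, which is the genuine idea.
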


\subsection{Weakly compatible system of  \texorpdfstring{$\ell$}{}-adic representations}

The following definition of compatible  system follows from \cite[Section~5.1]{BLGGT-2014}. For the convenience of readers, we also state it here. 
\begin{Def}\label{Def-compatible-sys}
    Let $K$ denote a number field. A \emph{rank $n$ weakly compatible system of $\ell$-adic representations $\mathcal{R}$ of $G_K$ defined over $M$} is a  $5$-tuple 
    $$
    (M, S, \{Q_{v}(T)\}, \{\rho_{\lambda}\}, \{H_{\tau}\}),
    $$
    where 
    \begin{enumerate}
        \item $M$ is a number field.
        \item $S$ is a finite set of primes of $K$. 
        \item for each $v\notin S$, $Q_{v}(T)$ is a monic degree $n$ polynomial in $M[T]$.
        \item for each prime $\lambda$ of $M$ (with residue characteristic $\ell$)
        $$
        \rho_{\lambda}: G_K\to \GL_n(\overline{M}_{\lambda})
        $$
        is a continuous, semi-simple representation such that
        \begin{itemize}
            \item if $v\notin S$ and $v\nmid \ell$, is a prime of $K$, then $\rho_{\lambda}$ is unramified at $v$ and $\rho_{\lambda}(\Frob_{v})$ has characteristic polynomial $Q_{v}(T)$.
            \item if $v|\ell$ then $\rho_{\lambda}|_{G_{K_v}}$ is de Rham.
        \end{itemize}
        \item for $\tau: K\hookrightarrow \overline{M}$, $H_{\tau}$ is a multiset of $n$ integers such that  for any $\overline{M}\hookrightarrow \overline{M}_{\lambda}$ over $M$ we have $\rm HT_{\tau}(\rho_{\lambda})=H_{\tau}$.
    \end{enumerate}

    We will call $\mathcal{R}$ \emph{regular} if for each $\tau: K\hookrightarrow \overline{M}$ every element of $H_{\tau}$ has multiplicity 1. Suppose that $K$ is $\Q$ and $n$ is 2, we will call $\mathcal{R}$ is \emph{odd} if every representation in $\mathcal{R}$ is odd.  
    
    We will call $\mathcal{R}$ \emph{pure of weight $w$} if 
    \begin{enumerate}
        \item [$\bullet$] for each $v\not\in S$, each root $\alpha$ of $Q_{v}(T)$ in $\overline{M}$ and each $\iota : \overline{M}\hookrightarrow \C$ we have 
        \[
        |\iota \alpha|^2=q_{v}^{w},
        \]
        where $q_{v}$ is the cardinality of the residue field of $K_{v}$;
        \item [$\bullet$] and for each $\tau: K\hookrightarrow \overline{M}$ and each complex conjugation $c$ in $Gal(\overline{M}/ \Q)$ we have 
        \[
        H_{c\tau}=\{w-h: h\in H_{\tau}\}.
        \]
    \end{enumerate}
    
\end{Def}
We will sometimes simply write $\{\rho_{\lambda}\}_{\lambda}$ for a weakly compatible system $\mathcal{R}$.

\begin{eg}
Let $X$ be a projective smooth variety over a number field $K$, then for $0\leq i\leq 2\dim X$, $\{H^i_{\rm{\acute{e}t}}(X_{\overline{X}}, \Q_{\ell})\}_{\ell}$ is a weakly compatible system of $\ell$-adic representations of $G_{K}$ defined over $\Q$.
\end{eg}

Let $\mathcal{R}_{1}=(M, S_1, \{Q_{v, 1}(T)\}, \{\rho_{v, 1}\}, \{H_{\tau, 1}\})$ and $\mathcal{R}_{2}=(M, S_2, \{Q_{v,2}(T)\}, \{\rho_{v, 2}\}, \{H_{\tau, 2}\})$ be two weakly compatible systems of $\ell$-adic representations of $G_{K}$ defined over $M$. We can define direct sum $\mathcal{R}_{1}\oplus \mathcal{R}_{2}$ a new weakly compatible system of $\ell$-adic representation of $G_{K}$ defined over $M$ by 
\[
(M, S_1\cup S_2, \{Q_{v,1}(T)Q_{v,2}(T)\}, \{\rho_{v,1}\oplus \rho_{v,2}\}, \{H_{\tau ,1}\sqcup H_{\tau, 2} \} \}).
\]

\section{Irreducibility of \texorpdfstring{$\ell$}{}-adic representations}\label{section-pf-Thm-irr-dim3}

In this section, we will prove Theorem \ref{Thm-main-irreducibility} and Corollary~\ref{Cor-Tate-conj-general}. We will fix $\{\rho_{\ell}\}_{\ell}$ to be the weakly compatible system in Theorem~\ref{Thm-main-irreducibility} and take $T$ to be the exceptional set which consists of primes $\ell$ where $\rho_{\ell}$ is not absolutely irreducible. Assuming that the Dirichlet density of $T$ is greater than 0, we will show that there exists an $\ell\in T$ such that $\rho_{\ell}: G_{\Q}\rightarrow \GL_{3}(\overline{\Q}_{\ell})$ has an odd 2-dimensional irreducible subrepresentation and thus $T$ is the whole set of rational primes.
To state our strategy, notice that since $\rho_{\ell}$ is not absolutely irreducible for $\ell\in T$, $\rho_{\ell}$ has one of the two decompositions 
$$
 \chi_{\ell,1}\oplus \chi_{\ell,2}\oplus \chi_{\ell,3} ,\quad \text{ or } \quad \psi_{\ell}\oplus r_{\ell},
$$
where $\chi_{\ell,i}$ ($i=1,2,3$) and $\psi_{\ell}$ are $1$-dimensional $\overline{\Q}_{\ell}$-representations and $r_{\ell}$ is an irreducible $2$-dimensional $\overline{\Q}_{\ell}$-representation. In the following, we will say the former case is \emph{of type $1+1+1$}, and say later case is \emph{of type $1+2$}. This definition can be generalized to a $3$-dimensional $G_K$-representation for a number field $K$. In subsection \ref{Section-pf-main-irr} we first exclude the possibility for $\rho_{\ell}$ to be of type $1+1+1$. Then we show that, if $\rho_{\ell}$ is of type $1+2$ and the 2-dimensional subrepresentation $r_{\ell}$ is odd, $\{\rho_{\ell}\}_{\ell}$ is in fact a direct sum of a rank 1 weakly compatible system and a rank 2 odd weakly compatible system. Finally we show the nonexistence of the `` even $2$-dimensional subrepresentation $r_{\ell}$'' up to a density zero subset of all primes. So Theorem \ref{Thm-main-irreducibility} follows. In subsection \ref{section-Tate-conj-general},  as an application of Theorem~\ref{Thm-main-irreducibility}, we prove Corollary~\ref{Cor-Tate-conj-general}.

\subsection{Proof of Theorem \ref{Thm-main-irreducibility}}\label{Section-pf-main-irr}
In this subsection we will specialize $\{\rho_{\ell}\}_{\ell}$ to be the regular weakly compatible system of self-dual representations in Theorem~\ref{Thm-main-irreducibility}, and let $T$ be the set of primes where $\rho_{\ell}$ is of type $1+2$. In order to prove Theorem~\ref{Thm-main-irreducibility}, we first assume that $\rho_{\ell}:G_{\Q}\rightarrow \GL_3(\overline{\Q}_{\ell})$ is of type $1+1+1$ and try to deduce a contradiction. Note that if this was the case, then for every Galois extension $K/\Q$, the restriction $\rho_{\ell}|_{G_{K}}$ is also of type $1+1+1$.  

\begin{Prop}\label{Prop-1+1+1-type}
Let $K/\Q$ be a totally real extension. Let $\{\pi_{\ell}\}_{\ell}$ be a rank $3$ weakly compatible system of $G_K$ defined over $\Q$. If $\{\pi_{\ell}\}_{\ell}$ is regular and pure of weight $0$, then $\pi_{\ell}$ is not of type $1+1+1$ for any $\ell$. In particular, under the condition of Theorem \ref{Thm-main-irreducibility}, $\rho_{\ell}|_{G_{K}}$ is not of type $1+1+1$.
\end{Prop}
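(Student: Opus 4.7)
The plan is to argue by contradiction: assume $\pi_\ell$ is of type $1+1+1$ for some $\ell$, so
\[
\pi_\ell \;\cong\; \chi_{\ell,1}\oplus \chi_{\ell,2}\oplus \chi_{\ell,3}
\]
as $\overline{\Q}_\ell$-representations of $G_K$, and derive that its Hodge--Tate multisets cannot simultaneously be regular and symmetric around $0$.

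First I would observe that each $\chi_{\ell,i}$ is itself geometric. Indeed, $\pi_\ell$ is part of a weakly compatible system, hence unramified almost everywhere and de Rham at all places of $K$ above $\ell$ (Definition of compatible system, Definition~\ref{Def-geo-Gal-repn}), and both properties pass to direct summands. Since $K$ is totally real, Lemma~\ref{Cor-dim1-repn} applies to each $\chi_{\ell,i}$ and yields
\[
\chi_{\ell,i} \;=\; \tau_i\cdot \varepsilon_\ell^{\,n_i}\big|_{G_K},\qquad n_i\in \Z_{\ge 0},
\]
with $\tau_i$ of finite order. Because $\tau_i$ contributes Hodge--Tate weight $0$ and $\varepsilon_\ell$ contributes Hodge--Tate weight $-1$, for every embedding $\tau\colon K\hookrightarrow \overline{\Q}_\ell$ we have
\[
\mathrm{HT}_\tau(\pi_\ell) \;=\; \{-n_1,\,-n_2,\,-n_3\}
\]
as multisets.

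Next I would invoke the hypotheses on $\{\pi_\ell\}_\ell$. Regularity forces the three integers $-n_1,-n_2,-n_3$ to be pairwise distinct. Purity of weight $0$ gives, for each $\tau\colon K\hookrightarrow \overline{\Q}$ and each complex conjugation $c\in \Gal(\overline{\Q}/\Q)$,
\[
H_{c\tau} \;=\; \{-h : h\in H_\tau\};
\]
since $K$ is totally real, every embedding $\tau$ satisfies $c\tau=\tau$, so the multiset $\mathrm{HT}_\tau(\pi_\ell)$ is stable under negation. Combining this with regularity, the three distinct integers $\{-n_1,-n_2,-n_3\}$ must form a symmetric set about $0$, i.e.\ be of the form $\{-a,0,a\}$ for some $a>0$. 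Equivalently, $\{n_1,n_2,n_3\}=\{-a,0,a\}$, which is incompatible with each $n_i\ge 0$ and $a>0$. This contradiction shows $\pi_\ell$ cannot be of type $1+1+1$.

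Finally, for the last assertion of the proposition, I would apply the first part with $\pi_\ell := \rho_\ell|_{G_K}$. The restriction of a regular, weight $0$, self-dual, rank $3$ weakly compatible system of $G_\Q$-representations defined over $\Q$ to $G_K$ remains a weakly compatible system of the same rank over $\Q$ with the same Hodge--Tate multisets (for each $\tau\colon K\hookrightarrow \overline{\Q}$, $H_\tau$ equals the $H$ of the $G_\Q$-system), hence it is again regular and pure of weight $0$. The potential obstacle is purely bookkeeping: one must verify that regularity and purity really are preserved under restriction to a totally real extension, but this is immediate from the definitions since the Hodge--Tate weights are determined on decomposition groups. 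Everything else is a short computation, so I expect no real technical obstacle beyond setting up this restriction argument cleanly.
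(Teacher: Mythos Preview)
Your argument is correct and follows the same overall strategy as the paper: apply Lemma~\ref{Cor-dim1-repn} to write each one-dimensional summand as a finite-order twist of a cyclotomic power, and then derive a contradiction between regularity and purity of weight~$0$. The only difference is that the paper invokes the eigenvalue (Weil-number) clause of purity to force all the exponents $n_i$ to coincide, whereas you use the Hodge-symmetry clause together with the non-negativity of the $n_i$ from Lemma~\ref{Cor-dim1-repn}; both routes yield the desired contradiction.
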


%%\begin{Rmk}
%%Notice that Proposition \ref{Prop-1+1+1-type} is in fact stronger than we need in proving Theorem \ref{Thm-main-irreducibility} since this proposition is true for all $\ell$. 
%%\end{Rmk}

\begin{pf}[proof of Proposition \ref{Prop-1+1+1-type}]
    Suppose $\pi_{\ell_0}\cong \pi_{\ell_0, 1}\oplus \pi_{\ell_0, 2}\oplus \pi_{\ell_0, 3}$ is of type $1+1+1$ for a fixed prime $\ell_0$. Then by Lemma~\ref{Cor-dim1-repn} we know that for each $i\in \{1,2,3\}$, $\pi_{\ell_0, i}\cong \chi_i\varepsilon^{n_i}$ has to be a product of a finite character $\chi_i$ and a power of cyclotomic character $\varepsilon$. On one side, by the pure of weight zero condition, all the $n_i$ are the same. However, by the regularity, $n_i$'s are distinct. This contradiction implies that $\pi_{\ell_0}$ cannot be of type $1+1+1$.
\end{pf}

Now we start to discuss the type $1+2$ case.  First we prove a lemma about the oddness condition in our setting.
\begin{Lem}\label{det}
If $\rho_{\ell}$ is of type $1+2$, then the 2-dimensional subrepresentation $r_{\ell}$ is odd if and only if $\det r_{\ell}$ is nontrivial.   
\end{Lem}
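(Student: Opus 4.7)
The easy direction $(\Rightarrow)$ is essentially by definition: if $r_{\ell}$ is odd, then $\det r_{\ell}(c)=-1$, which differs from the value $1$ the trivial character takes, so $\det r_{\ell}$ cannot be trivial. The content is the reverse direction $(\Leftarrow)$. The plan is to reduce it, via a dihedral-reduction argument, to the already-proved Proposition~\ref{Prop-1+1+1-type}.

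Assume $\det r_{\ell}$ is nontrivial; I will show $r_{\ell}$ is odd. The first step is to extract structural information from self-duality of $\rho_{\ell}$. Comparing irreducible constituents in $\psi_{\ell}\oplus r_{\ell}\cong \psi_{\ell}^{-1}\oplus r_{\ell}^{*}$, and using that $r_{\ell}$ and $r_{\ell}^{*}$ are $2$-dimensional irreducible while $\psi_{\ell}^{-1}$ is $1$-dimensional, the Krull--Schmidt theorem forces $\psi_{\ell}^{2}=1$ and $r_{\ell}\cong r_{\ell}^{*}$. Taking determinants in $r_{\ell}\cong r_{\ell}^{*}$ gives $(\det r_{\ell})^{2}=1$, so $\det r_{\ell}$ is a quadratic character of $G_{\Q}$; by class field theory it is cut out by a quadratic extension $F=\Q(\sqrt{d})$.

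Next, I will combine self-duality of $r_{\ell}$ with the standard $2$-dimensional identity $r_{\ell}^{*}\cong r_{\ell}\otimes (\det r_{\ell})^{-1}$ to obtain $r_{\ell}\otimes \det r_{\ell}\cong r_{\ell}$. Writing $\chi := \det r_{\ell}$, the projection formula gives $\operatorname{Ind}_{G_{F}}^{G_{\Q}}(r_{\ell}|_{G_{F}})\cong r_{\ell}\otimes \operatorname{Ind}_{G_{F}}^{G_{\Q}}\mathbf{1}\cong r_{\ell}\oplus (r_{\ell}\otimes \chi)\cong 2r_{\ell}$. Frobenius reciprocity then yields
\[
2 \;=\; \dim\Hom_{G_{\Q}}(r_{\ell},2r_{\ell}) \;=\; \dim\Hom_{G_{F}}(r_{\ell}|_{G_{F}}, r_{\ell}|_{G_{F}}),
\]
which by Schur's lemma rules out $r_{\ell}|_{G_{F}}$ being irreducible. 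Hence $r_{\ell}|_{G_{F}}$ is reducible, and one concludes $r_{\ell}\cong \operatorname{Ind}_{G_{F}}^{G_{\Q}}\lambda$ for some character $\lambda$ of $G_{F}$, with $r_{\ell}|_{G_{F}}\cong\lambda\oplus \lambda^{\sigma}$ where $\sigma$ generates $\Gal(F/\Q)$.

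Finally, suppose for contradiction that $r_{\ell}$ is even, i.e.\ $\det r_{\ell}(c)=1$. Then $c\in G_{F}$, so $d>0$ and $F$ is totally real. Restricting to $G_{F}$ yields $\rho_{\ell}|_{G_{F}}\cong \psi_{\ell}|_{G_{F}}\oplus \lambda\oplus \lambda^{\sigma}$, which is of type $1+1+1$. The restricted system $\{\rho_{\ell}|_{G_{F}}\}_{\ell}$ inherits rank $3$, regularity, and purity of weight $0$ from $\{\rho_{\ell}\}_{\ell}$, so Proposition~\ref{Prop-1+1+1-type} applied to the totally real field $F$ gives a contradiction. Hence $d<0$, $\det r_{\ell}(c)=-1$, and $r_{\ell}$ is odd. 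The main technical hurdle is the dihedral reduction of the third paragraph: the point is that self-duality of an irreducible $2$-dimensional representation with nontrivial determinant forces it to be induced from the index-two subgroup $G_F$, which is precisely what makes Proposition~\ref{Prop-1+1+1-type} applicable.
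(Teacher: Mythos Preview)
Your proof is correct and follows essentially the same skeleton as the paper's: deduce that $r_{\ell}$ is self-dual with quadratic determinant, show that $r_{\ell}|_{G_{F}}$ is reducible (so $\rho_{\ell}|_{G_{F}}$ is of type $1+1+1$), and then invoke Proposition~\ref{Prop-1+1+1-type} to force $F$ to be imaginary. The only difference is in how the reducibility of $r_{\ell}|_{G_{F}}$ is obtained: the paper observes that a self-dual irreducible $2$-dimensional representation with nontrivial determinant has image in $\GO_{2}(\overline{\Q}_{\ell})$, so its restriction to $G_{F}=\ker(\det r_{\ell})$ lands in the abelian torus $\SO_{2}(\overline{\Q}_{\ell})$ and therefore splits; you instead derive the twist-invariance $r_{\ell}\otimes\chi\cong r_{\ell}$ and use the projection formula plus Frobenius reciprocity to compute $\dim\Hom_{G_{F}}(r_{\ell}|_{G_{F}},r_{\ell}|_{G_{F}})=2$. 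These are two standard phrasings of the same phenomenon, and neither gains anything substantive over the other.
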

\begin{proof}
The self-dual condition of $\rho_{\ell}$ implies that $r_{\ell}$ is also self-dual, thus $\det r_{\ell}$ is a quadratic character. Since it is a fact that the image of a self-dual $2$-dimensional representation is contained in either $\GO_2(\overline{\Q}_{\ell})$ or in $\SL_2(\overline{\Q}_{\ell})$, suppose that $\det r_{\ell}$ is nontrivial, then $r_{\ell}(G_{\Q})$ is not in $ \SL_{2}(\overline{\Q}_{\ell})$. Let $K$ be the quadratic field fixed by the kernel of $\det r_{\ell}$, then we have $r_{\ell}(G_{K})\subset \SO_{2}(\overline{\Q}_{\ell})$ is an abelian group. Then $r_{\ell}|_{G_{K}}$ is not irreducible. So $\rho_{\ell}|_{G_{K}}$ is of type $1+1+1$. By Proposition \ref{Prop-1+1+1-type}, $K$ is imaginary, hence $r_{\ell}$ is odd. The other direction is easy.
\end{proof}

\begin{Prop}\label{Odd-compatible}
If $\rho_{\ell_0}$ is of type $1+2$ with an odd $2$-dimensional subrepresentation $r_{\ell_0}$ for some prime $\ell_0$, then so is $\rho_{\ell}$ for every prime $\ell$.
\end{Prop}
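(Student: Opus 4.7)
The plan is to upgrade the hypothesized decomposition at the single prime $\ell_0$ to a decomposition of the whole compatible system, by producing a compatible system on the $2$-dimensional factor via Pan's theorem and then comparing traces at Frobenius elements. Since $r_{\ell_0}$ is a subrepresentation of the geometric representation $\rho_{\ell_0}$, it is itself geometric, and the two Hodge--Tate weights of $r_{\ell_0}$ form a sub-multiset of the Hodge--Tate weights of $\rho_{\ell_0}$, which are distinct by regularity. Applying Pan's theorem \cite[Theorem~1.0.4]{Pan-odd-FM-Conj} to the odd, geometric, regular representation $r_{\ell_0}$ yields that $r_{\ell_0}$ is modular, and hence fits into a weakly compatible system $\{r_\ell\}_\ell$ of $\ell$-adic representations coming from a classical modular form. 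At the same time, $\psi_{\ell_0}$ is one-dimensional and geometric, so by Lemma~\ref{Cor-dim1-repn} it is of the form $\tau\cdot \varepsilon_{\ell_0}^n$ for a finite-order character $\tau$ and an integer $n$; the family $\{\tau\cdot\varepsilon_\ell^n\}_\ell$ is a rank $1$ weakly compatible system $\{\psi_\ell\}_\ell$ extending $\psi_{\ell_0}$.

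Next, form the rank $3$ weakly compatible system $\{\psi_\ell\oplus r_\ell\}_\ell$. At the prime $\ell_0$, the characteristic polynomial of Frobenius of $\psi_{\ell_0}\oplus r_{\ell_0}$ at every unramified prime $v$ coincides with the corresponding polynomial $Q_v(T)$ of the original system $\{\rho_\ell\}_\ell$, by hypothesis. Since the characteristic polynomials in a weakly compatible system defined over $\Q$ are independent of the prime, this equality holds for every $\ell$ outside a finite set of primes, so the traces of $\rho_\ell(\Frob_v)$ and $(\psi_\ell\oplus r_\ell)(\Frob_v)$ agree on a set of primes $v$ of Dirichlet density one. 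By the Chebotarev density theorem and the Brauer--Nesbitt theorem, the semisimplifications of $\rho_\ell$ and $\psi_\ell\oplus r_\ell$ are isomorphic; since both are already semisimple by the definition of a weakly compatible system, we conclude $\rho_\ell\cong \psi_\ell\oplus r_\ell$ for every prime $\ell$.

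Finally, to see that $r_\ell$ is irreducible for every $\ell$, note that if $r_\ell$ decomposed into a sum of two characters, then $\rho_\ell$ would be of type $1+1+1$, contradicting Proposition~\ref{Prop-1+1+1-type}. For the oddness, observe that $\{\det r_\ell\}_\ell$ is a one-dimensional weakly compatible system of geometric representations of $G_{\Q}$, so by Lemma~\ref{Cor-dim1-repn} it is of the form $\tau'\cdot\varepsilon_\ell^m$ for a fixed finite-order character $\tau'$ and fixed integer $m$. Hence the value $\det r_\ell(c)=\tau'(c)\cdot(-1)^m$ is independent of $\ell$, and since it equals $-1$ at $\ell=\ell_0$ by assumption, it equals $-1$ for every $\ell$, so each $r_\ell$ is odd.

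The main point requiring care is the first step, namely verifying that $r_{\ell_0}$ meets the hypotheses of Pan's theorem (in particular, geometricity and distinctness of Hodge--Tate weights); both follow cleanly from the corresponding properties of $\rho_{\ell_0}$ once one observes that subquotients of geometric representations are geometric and that the Hodge--Tate weights restrict compatibly. After this input is secured, the remainder is a standard Chebotarev-plus-Brauer--Nesbitt bookkeeping argument.
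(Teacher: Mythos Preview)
Your argument is correct and follows the same approach as the paper: invoke Pan's theorem \cite[Theorem~1.0.4]{Pan-odd-FM-Conj} to place $r_{\ell_0}$ in a compatible system coming from a cuspidal eigenform, extend $\psi_{\ell_0}$ to a rank~$1$ compatible system, and compare with $\{\rho_\ell\}_\ell$. You spell out details the paper leaves implicit---the Chebotarev/Brauer--Nesbitt comparison, the irreducibility of each $r_\ell$ via Proposition~\ref{Prop-1+1+1-type}, and a direct check of oddness at every $\ell$---whereas the paper simply observes that the compatible system attached to a cuspidal eigenform is automatically odd with irreducible members.
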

\begin{proof}
By assumption, we have  $\rho_{\ell_0}\cong \psi_{\ell_0}\oplus r_{\ell_0}$ where $r_{\ell_0}$ is of $2$-dimension and is absolutely irreducible and odd. By \cite[Theorem~1.0.4]{Pan-odd-FM-Conj}, $r_{\ell_0}$ comes from a cuspidal eigenform. So $r_{\ell_0}$ lives in an odd weakly compatible system $\{r_{\ell}\}_{\ell}$. We can also extend $\psi_{\ell_0}$ to a weakly compatible system since $\rho_{\ell}$ is geometric. So $\{\rho_{\ell}\}_{\ell}\cong \{\psi_{\ell}\}_{\ell}\oplus \{r_{\ell}\}_{\ell}$.
\end{proof}

The above Proposition~\ref{Odd-compatible} proves possibility $(2)$ in Theorem~\ref{Thm-main-irreducibility}. And combining this proposition with the Proposition~\ref{Prop-1+2-type} below, we can complete the proof of Theorem~\ref{Thm-main-irreducibility}. Recall that the exceptional set is defined by $T=\{\ell \text{ prime}| \rho_{\ell} \text{ is of type }1+2\}$.

\begin{Prop}\label{Prop-1+2-type}
    If $T$ has positive Dirichlet density, then there exists at least one $\ell\in T$ such that $r_{\ell}$ is odd.
\end{Prop}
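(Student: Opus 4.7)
The plan is to argue by contradiction: assume that $r_{\ell}$ is even for every $\ell\in T$, and derive a contradiction by applying Calegari's Theorem~\ref{Thm-Frank-FM-conj} to a well-chosen $\ell\in T$. First I extract the structural consequences of this assumption. By Lemma~\ref{det}, evenness of $r_{\ell}$ forces $\det r_{\ell}=1$. Because $\{\rho_{\ell}\}_{\ell}$ is a self-dual weakly compatible system defined over $\Q$, the determinant $\chi\colonequals\det\rho_{\ell}$ is a fixed quadratic character $G_{\Q}\to\{\pm 1\}$, so $\psi_{\ell}=\chi\cdot(\det r_{\ell})^{-1}=\chi$ for every $\ell\in T$. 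Purity of weight $0$, regularity, and the rank $3$ assumption force the Hodge--Tate weights of $\rho_{\ell}$ to be $\{-a,0,a\}$ for some fixed integer $a\ge 1$; since $\psi_{\ell}=\chi$ has finite order, $\psi_{\ell}$ absorbs the weight $0$ and the remaining distinct weights $\{-a,a\}$ lie on $r_{\ell}$.

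Next, for every $p$ outside the ramification locus of $\{\rho_{\ell}\}$,
\[
\tr r_{\ell}(\Frob_{p})=\tr\rho_{\ell}(\Frob_{p})-\chi(\Frob_{p})
\]
is a rational number independent of $\ell\in T$, so $\{r_{\ell}\}_{\ell\in T}$ behaves like a rank $2$ compatible family of $\ell$-adic traces, even though $r_{\ell}$ is defined a priori only at primes $\ell\in T$.

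The crux is to verify Calegari's hypotheses (a)--(e) for at least one $\ell\in T$ with $\ell>7$. Conditions (a) and (b) follow immediately from the previous paragraphs. For (c), the Hodge--Tate weights $\{-a,a\}$ of $r_{\ell}$ differ by $2a$, while any twist of $\bigl(\begin{smallmatrix}\bar\varepsilon_{\ell}&*\\ 0&1\end{smallmatrix}\bigr)$ has Hodge--Tate weights differing by $1$; by Fontaine--Laffaille theory this difference is preserved mod $\ell-1$, so (c) holds once $\ell>2a+2$. For (d) and (e), a Ribet-style argument applies to the common trace system: if $\bar r_{\ell}$ were residually reducible or dihedral for infinitely many $\ell\in T$, the mod-$\ell$ congruences satisfied by $\tr r_{\ell}(\Frob_{p})$ would, after pigeon-holing over the finitely many characters and quadratic extensions of bounded conductor, promote to a global decomposition of $r_{\ell}$ or to an isomorphism $r_{\ell}\cong r_{\ell}\otimes\eta$ for some nontrivial quadratic $\eta$; combined with the trivial determinant and the Hodge--Tate weights $\{-a,a\}$, both conclusions contradict either the absolute irreducibility of $r_{\ell}$ or the self-dual structural constraints imposed on it.

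Since $T$ has positive Dirichlet density and only finitely many $\ell$ are excluded by (c)--(e) and by $\ell\le 7$, one can choose some $\ell\in T$ at which all hypotheses of Theorem~\ref{Thm-Frank-FM-conj} hold; Calegari's theorem then asserts $r_{\ell}$ is modular, hence odd, contradicting the standing assumption and finishing the proof. I expect the main obstacle to lie in conditions (d) and (e): although the trace compatibility across $\ell\in T$ is as restrictive as for an honest rank $2$ compatible system, it is guaranteed only on the positive-density subset $T$, and the Ribet-style transfer from residual reducibility or residual dihedrality back to a global constraint on $r_{\ell}$ has to be carried out with care, in particular ruling out the case that $r_{\ell}$ itself is globally dihedral in a way that would be compatible with trivial determinant and weights $\{-a,a\}$.
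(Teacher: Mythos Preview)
Your overall strategy matches the paper's exactly: assume every $r_{\ell}$ with $\ell\in T$ is even, invoke Lemma~\ref{det} to get $\det r_{\ell}=1$, and then feed some $r_{\ell}$ into Calegari's Theorem~\ref{Thm-Frank-FM-conj} to force oddness. Your extra observation that $\psi_{\ell}=\chi$ is the fixed character $\det\rho_{\ell}$ and that the traces $\tr r_{\ell}(\Frob_{p})$ are therefore $\ell$-independent is correct and useful, though the paper does not make it explicit.

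For condition~(c) you take a different route from the paper, and yours carries an extra hypothesis. You argue via Fontaine--Laffaille that the residual inertial weights differ by $2a\not\equiv\pm 1\pmod{\ell-1}$; but Fontaine--Laffaille requires $r_{\ell}|_{G_{\Q_{\ell}}}$ to be \emph{crystalline}, whereas the definition of weakly compatible system in the paper only demands de~Rham. The paper's argument is both simpler and hypothesis-free: if $\overline{r}_{\ell}|_{G_{\Q_{\ell}}}$ were a twist of $\bigl(\begin{smallmatrix}\overline{\varepsilon}_{\ell}&*\\0&1\end{smallmatrix}\bigr)$, then $\det\overline{r}_{\ell}$ would equal a square times $\overline{\varepsilon}_{\ell}$ and hence take non-square values in $\F_{\ell}^{\times}$, contradicting $\det r_{\ell}=1$.

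The real gap is in (d) and~(e), which you yourself flag. Your ``Ribet-style'' sketch is not a proof: pigeon-holing mod-$\ell$ congruences over finitely many characters/quadratic fields does not by itself promote residual reducibility or dihedrality at infinitely many $\ell$ to a global statement about a single $r_{\ell}$, especially since your rank~$2$ family lives only over the positive-density set $T$ rather than over all primes. The paper sidesteps this entirely by working with the ambient rank~$3$ compatible system $\{\rho_{\ell}\}_{\ell}$, which \emph{is} defined at every prime, and quoting two black boxes: Proposition~5.3.2 of \cite{BLGGT-2014} gives a density-one set on which any $2$-dimensional irreducible constituent of $\rho_{\ell}$ has absolutely irreducible reduction, and Proposition~2.7 of \cite{Frank-Gee-irr-of-auto-Gal-repn} gives a density-one set on which such a constituent has non-dihedral reduction. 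Intersecting with $T$ then yields the desired $\ell$. Note also that these results give density-one (not cofinite) sets, so your claim that ``only finitely many $\ell$ are excluded by (c)--(e)'' is stronger than what is actually established.
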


The strategy of proving Proposition \ref{Prop-1+2-type} is proof by contradiction. Assume that Proposition \ref{Prop-1+2-type} is false, then $T$ has positive density and for every $\ell\in T$, $\rho_{\ell}$ has a 2-dimensional irreducible even subrepresentation $r_{\ell}:G_{\Q}\rightarrow \GL_{2}(\overline{\Q}_{\ell})$. By Lemma~\ref{det}, $\det r_{\ell}=1$ for $\ell\in T$. (This is really the condition we will use later.) In the rest part of this subsection, we will use Calegari's theorem on the Fontaine-Mazur conjecture (Theorem \ref{Thm-Frank-FM-conj}) to show that for some $\ell\in T$ the corresponding $r_{\ell}$ are also odd, which is impossible, and hence deduce to contradiction.

In order to use Theorem \ref{Thm-Frank-FM-conj}, without loss of generality, we can assume all primes in $T$ are greater than $7$, then, under the conditions of Theorem \ref{Thm-main-irreducibility}, we need to show that there is a subset $T'\subset T$ consisting of infinity many $\ell$ such that $r_{\ell}$ satisfies all the five conditions of Theorem \ref{Thm-Frank-FM-conj}.

\begin{Lem}\label{Lem-conditions-1-2}
 For $\ell\in T$, $r_{\ell}$ is geometric and has distinct Hodge-Tate weights. Equivalently, conditions $(a)$ and $(b)$ of Theorem \ref{Thm-Frank-FM-conj} are true for $r_{\ell}$ with $\ell\in T$.
\end{Lem}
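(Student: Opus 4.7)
The plan is to observe that each of the two required properties passes from $\rho_\ell$ to its direct summand $r_\ell$ for purely formal reasons from $\ell$-adic Hodge theory and the definition of a weakly compatible system.

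First, I would verify condition $(a)$, i.e.\ that $r_\ell$ is geometric. Since $\rho_\ell$ belongs to the weakly compatible system $\{\rho_\ell\}_\ell$, it is unramified outside a finite set $S$ of primes and de Rham at every place above $\ell$. Unramifiedness is inherited by any subrepresentation (the inertia subgroup acts trivially on the whole $\rho_\ell$ outside $S$, hence on $r_\ell$). For the de Rham condition, I would invoke the standard stability property from $p$-adic Hodge theory: the category of de Rham representations of $G_{\Q_\ell}$ is closed under subquotients. Applying this to the decomposition $\rho_\ell|_{G_{\Q_\ell}}\cong \psi_\ell|_{G_{\Q_\ell}}\oplus r_\ell|_{G_{\Q_\ell}}$ shows $r_\ell|_{G_{\Q_\ell}}$ is de Rham, giving $(a)$.

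Next I would verify $(b)$, the distinctness of Hodge-Tate weights. By the additivity of the $\tau$-Hodge-Tate weight multisets under direct sums, we have
\[
\mathrm{HT}_\tau(\rho_\ell) = \mathrm{HT}_\tau(\psi_\ell)\sqcup \mathrm{HT}_\tau(r_\ell)
\]
for the unique embedding $\tau:\Q_\ell\hookrightarrow\overline{\Q}_\ell$. Since the weakly compatible system $\{\rho_\ell\}_\ell$ is regular by hypothesis, the three integers in $\mathrm{HT}_\tau(\rho_\ell)$ are pairwise distinct. Hence the two integers in $\mathrm{HT}_\tau(r_\ell)$, forming a sub-multiset, are also distinct, proving $(b)$.

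There is no real obstacle here; the lemma is essentially a bookkeeping consequence of the definitions. The only step requiring care is the appeal to closure of de Rham representations under subquotients, but this is a classical and well-documented fact (see e.g.\ \cite{Fontaine-Ouyang-p-adic-Gal-repn}), so I would simply cite it rather than reprove it. The genuine work in the argument will arise when verifying conditions $(c)$--$(e)$ of Theorem~\ref{Thm-Frank-FM-conj} in the subsequent lemmas.
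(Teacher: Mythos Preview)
Your proposal is correct and follows essentially the same approach as the paper: both argue that geometricity and distinctness of Hodge--Tate weights pass from $\rho_\ell$ to its subquotient $r_\ell$ by the standard closure properties of de Rham representations and the regularity hypothesis. The paper additionally remarks that the Hodge--Tate weights of $r_\ell$ are of the form $\{m,-m\}$ for some nonzero integer $m$ (using self-duality and purity of weight $0$), but this is an aside not needed for the lemma itself.
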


\begin{pf}
Every $\rho_{\ell}$ is geometric, and so is $r_{\ell}$ since the geometric property is closed under taking subquotient, this proves the condition $(a)$. 

Recall that the weakly compatible system $\{\rho_{\ell}\}_{\ell}$ is regular. So $\rho_{\ell}$ has distinct Hodge-Tate weights. The $r_{\ell}$ is a subquotient of $\rho_{\ell}$, hence condition $(b)$ is true. Indeed, one easily sees that the Hodge-Tate weights of $r_{\ell}$ are $\{m,-m\}$ for some nonzero integer $m$.
\end{pf}
 
\begin{Lem}\label{Lem-condition-3}
Assume that Proposition \ref{Prop-1+2-type} is false. The representation $\overline{r}_{\ell}|_{G_{\Q_{\ell}}}$ is not a twist of an extension of trivial character by mod $\ell$ cyclotomic character $\overline{\varepsilon}_{\ell}$. In particular, condition $(c)$ of Theorem \ref{Thm-Frank-FM-conj} is true for $r_{\ell}$ with $\ell\in T$. 
\end{Lem}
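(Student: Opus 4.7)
The plan is to reduce the claim to a parity obstruction on the determinant. Under the standing assumption that Proposition~\ref{Prop-1+2-type} fails, Lemma~\ref{det} gives $\det r_{\ell}=1$ for every $\ell\in T$, so the residual representation $\overline{r}_{\ell}$, and hence its restriction to $G_{\Q_{\ell}}$, also has trivial determinant.

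Suppose for contradiction that $\overline{r}_{\ell}|_{G_{\Q_{\ell}}}\cong \chi\otimes \begin{pmatrix}\overline{\varepsilon}_{\ell} & * \\ 0 & 1\end{pmatrix}$ for some character $\chi:G_{\Q_{\ell}}\to\overline{\F}_{\ell}^{\times}$. Taking determinants forces
\[
\chi^{2}=\overline{\varepsilon}_{\ell}^{-1},
\]
and so the lemma reduces to showing that this equation has no solution among characters of $G_{\Q_{\ell}}$.

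To rule this out, I would invoke the standard classification of characters $\chi:G_{\Q_{\ell}}\to\overline{\F}_{\ell}^{\times}$. Since $\overline{\F}_{\ell}^{\times}$ has no $\ell$-torsion, $\chi$ is trivial on wild inertia and factors through the tame quotient; among tame characters of $G_{\Q_{\ell}}$ the restriction to $I_{\Q_{\ell}}$ must be Frobenius-invariant, which forces it to be a power of the level-one fundamental character $\overline{\varepsilon}_{\ell}|_{I_{\Q_{\ell}}}$. Hence $\chi=\mu\cdot \overline{\varepsilon}_{\ell}^{k}$ for some unramified character $\mu$ and some $k\in\Z$. Restricting the equation $\chi^{2}=\overline{\varepsilon}_{\ell}^{-1}$ to $I_{\Q_{\ell}}$ then yields $\overline{\varepsilon}_{\ell}^{2k+1}|_{I_{\Q_{\ell}}}=1$, i.e., $(\ell-1)\mid (2k+1)$. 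Since we may assume $\ell>7$, the integer $\ell-1$ is even while $2k+1$ is odd, so no such $k$ exists; this is the desired contradiction.

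The only step that uses any nontrivial input is the classification of mod-$\ell$ characters of $G_{\Q_{\ell}}$, which is standard; everything else is a one-line determinant computation and a parity check. The conceptual point worth emphasizing is that the argument depends on the self-duality plus evenness hypothesis only through Lemma~\ref{det}, which pins $\det r_{\ell}$ to be trivial, and once this is in place the incompatibility with a twist of the asserted upper-triangular shape is forced by the parity of $\ell-1$ alone.
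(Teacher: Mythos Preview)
Your proof is correct and follows essentially the same approach as the paper: both reduce to the determinant, observing that a twist of the asserted upper-triangular shape has determinant $\chi^{2}\overline{\varepsilon}_{\ell}$, which cannot be trivial since $\overline{\varepsilon}_{\ell}$ is not a square. The only difference is in how this last point is verified: the paper simply picks $g\in G_{\Q_{\ell}}$ with $\overline{\varepsilon}_{\ell}(g)\in\F_{\ell}^{\times}$ a nonsquare (using surjectivity of $\overline{\varepsilon}_{\ell}$) and notes that $\chi(g)^{2}\overline{\varepsilon}_{\ell}(g)$ is then a nonsquare, hence not $1$; your route via the classification of tame characters and the parity of $\ell-1$ reaches the same conclusion but with a bit more machinery than necessary.
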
    
\begin{proof}
Note that the mod $\ell$ cyclotomic character $\overline{\varepsilon}_{\ell}$ is a surjective map to $\F_{\ell}^{\times}$. Taking $g\in G_{\Q_{\ell}}$ such that $\overline{\varepsilon}_{\ell}(g)\in \F_{\ell}^{\times}$ is not a square. If $\overline{r}_{\ell}|_{G_{\Q_{\ell}}}$ is a twist of an extension of trivial character by mod $\ell$ cyclotomic character, then $\det\overline{r}_{\ell}|_{G_{\Q_{\ell}}}(g)$ is not a square element. So $\det r_{\ell}$ is nontrivial. This is a contradiction.
\end{proof}

\begin{Lem}\label{Lem-condition-4}
There exists a subset $T'$ of Dirichlet density one with respect to $T$, such that $\overline{r}_{\ell}$ is not of dihedral type for any $\ell\in T'$. Hence condition $(d)$ of Theorem~\ref{Thm-Frank-FM-conj} is true for $r_{\ell}$ with $\ell\in T'$.
\end{Lem}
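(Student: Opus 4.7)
The plan is to argue by contradiction. Suppose that the set of primes $\ell \in T$ with $\overline{r}_{\ell}$ of dihedral type has positive Dirichlet density in $T$. For every such $\ell$, the dihedral hypothesis produces a nontrivial quadratic character $\epsilon_{\ell}\colon G_{\Q}\to \{\pm 1\}$ satisfying $\overline{r}_{\ell}\otimes \epsilon_{\ell} \cong \overline{r}_{\ell}$, equivalently $\overline{r}_{\ell} \cong \operatorname{Ind}_{G_{K_{\ell}}}^{G_{\Q}} \overline{\chi}_{\ell}$ with $K_{\ell}$ the quadratic field cut out by $\epsilon_{\ell}$. Since $\overline{r}_{\ell}$ is unramified outside $S \cup \{\ell\}$, the same must hold for $\epsilon_{\ell}$.

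The first main step is a pigeonhole argument to fix a single quadratic field. Any quadratic character of $G_{\Q}$ unramified outside $S \cup \{\ell\}$ has the form $\eta$ or $\eta \chi_{\ell}^{*}$, where $\eta$ runs over the finite set of quadratic characters of $G_{\Q}$ unramified outside $S$, and $\chi_{\ell}^{*}$ denotes the unique nontrivial quadratic character of conductor a power of $\ell$. The ``ramified-at-$\ell$'' case can be excluded for $\ell$ sufficiently large by a local $\ell$-adic Hodge theory computation: since $r_{\ell}$ is crystalline at $\ell$ with fixed Hodge-Tate weights $\{-a, a\}$, for $\ell$ in the Fontaine-Laffaille range the restriction $\overline{r}_{\ell}|_{I_{\ell}}$ takes the shape $\omega^{\pm a}$ or $\omega_{2}^{a}, \omega_{2}^{a\ell}$ for the fundamental characters, whereas $\chi_{\ell}^{*}|_{I_{\ell}} = \omega^{(\ell-1)/2}$; twist-invariance under $\omega^{(\ell-1)/2}$ then forces the divisibility $4a \equiv 0 \pmod{\ell - 1}$, impossible for $\ell > 4a + 1$. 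Applying pigeonhole to the remaining positive density subset, we obtain a fixed nontrivial quadratic character $\eta$ (unramified outside $S$) with associated quadratic field $K$, and a positive density subset $T_{2} \subseteq T$ such that $\epsilon_{\ell} = \eta$ for every $\ell \in T_{2}$.

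Next, I convert this residual twist-invariance into an honest equality of traces. For every prime $p \notin S$ inert in $K$, the relation $\overline{r}_{\ell} \otimes \eta \cong \overline{r}_{\ell}$ forces $\tr \overline{r}_{\ell}(\Frob_{p}) = 0$, i.e.\ $\tr r_{\ell}(\Frob_{p}) \equiv 0 \pmod{\ell}$. The self-duality of $\rho_{\ell}$ together with the irreducibility of $r_{\ell}$ forces $\psi_{\ell}$ to be a quadratic character, so $\psi_{\ell}(\Frob_{p}) \in \{\pm 1\}$; meanwhile $a_{p} := \tr \rho_{\ell}(\Frob_{p}) \in \Z$ is independent of $\ell$ (since $\{\rho_{\ell}\}_{\ell}$ is defined over $\Q$) and is bounded by $3$ in absolute value by purity of weight zero. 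The congruence $a_{p} \equiv \psi_{\ell}(\Frob_{p}) \pmod{\ell}$ therefore becomes an equality once $\ell > 4$, giving $\tr r_{\ell}(\Frob_{p}) = 0$ for all sufficiently large $\ell \in T_{2}$ and all such $p$. By Chebotarev, a $2$-dimensional semisimple $G_{\Q}$-representation whose Frobenius traces vanish on every prime inert in $K$ must itself be induced from $G_{K}$, so $r_{\ell} \cong \operatorname{Ind}_{G_{K}}^{G_{\Q}} \chi_{\ell}$ for an honest character $\chi_{\ell}$ of $G_{K}$.

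I then case split on $K$. If $K$ is imaginary quadratic, the complex conjugation $c$ lies outside $G_{K}$ and satisfies $c^{2} = 1$ in $G_{\Q}$; a direct basis computation in the induced representation gives $\det r_{\ell}(c) = -\chi_{\ell}(c^{2}) = -1$, so $r_{\ell}$ is odd, contradicting the standing assumption in the proof of Proposition~\ref{Prop-1+2-type} that $\det r_{\ell} = 1$ (equivalently, by Lemma~\ref{det}, that $r_{\ell}$ is even). If $K$ is real quadratic, hence totally real, then $\rho_{\ell}|_{G_{K}} \cong \psi_{\ell}|_{G_{K}} \oplus \chi_{\ell} \oplus \chi_{\ell}^{c}$ is a regular, pure weight zero, rank $3$ weakly compatible system on $G_{K}$ (defined over $\Q$) of type $1+1+1$, directly contradicting Proposition~\ref{Prop-1+1+1-type} applied to $G_{K}$. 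Either way we obtain a contradiction, so the dihedral primes have Dirichlet density zero in $T$, as required. The main obstacle I anticipate is the local $\ell$-adic Hodge theory input used to rule out the ramified-at-$\ell$ twist; once that step is granted, the remainder of the argument then follows essentially formally from Proposition~\ref{Prop-1+1+1-type}.
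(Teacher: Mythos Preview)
The paper's proof is a one-line citation to Proposition~2.7 of Calegari--Gee \cite{Frank-Gee-irr-of-auto-Gal-repn}; your approach is genuinely different, attempting a self-contained argument that lifts the residual dihedral structure to characteristic zero and then invokes Proposition~\ref{Prop-1+1+1-type}.

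Your strategy is sound, and the obstacle you flag can in fact be removed entirely. The Fontaine--Laffaille step is problematic as written, since Definition~\ref{Def-compatible-sys} only assumes $\rho_{\ell}|_{G_{\Q_{\ell}}}$ is de Rham, not crystalline, so your description of $\overline{r}_{\ell}|_{I_{\ell}}$ is not justified. But neither this step nor the pigeonhole is needed: your trace-lifting argument (Step~5) already works one prime at a time. Fix any $\ell\in T$ with $\ell>2$ and $\overline r_\ell$ induced from $G_{K_\ell}$. If $K_\ell$ were imaginary, your Step~7 computation at the residual level gives $\det\overline r_\ell(c)=-1$, contradicting $\det r_\ell=1$; so $K_\ell$ is real. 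Now for every $p\notin S\cup\{\ell\}$ inert in $K_\ell$ one has $\tr\overline r_\ell(\Frob_p)=0$, while $\tr r_\ell(\Frob_p)=a_p-\psi_\ell(\Frob_p)\in\Z$ has absolute value at most~$2$ by purity, so the congruence is already an equality. Hence $r_\ell$ itself is induced from $G_{K_\ell}$, and Proposition~\ref{Prop-1+1+1-type} applied to the restricted system $\{\rho_{\ell'}|_{G_{K_\ell}}\}_{\ell'}$ (still regular, pure of weight~$0$, defined over~$\Q$, with $K_\ell$ totally real) yields the contradiction. This shows $\overline r_\ell$ is dihedral for at most finitely many $\ell\in T$, stronger than the density-one claim. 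What your approach buys over the paper's is a self-contained argument with no external black box; what the paper's citation buys is brevity.
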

\begin{proof}
By Proposition 2.7 in \cite{Frank-Gee-irr-of-auto-Gal-repn}, there is a Dirichlet density one subset $J$ of primes, such that if $\rho_{\lambda}$ has a 2-dimensional even absolutely irreducible subrepresentation $r_{\lambda}$ for $\lambda\in J$, then $\overline{r}_{\lambda}$ is not of dihedral type. Then the set $T^{'}=J\cap T$ is our desired subset.

\end{proof}
At last, we need to speak about the irreducibility of $\overline{r}_{\ell}$. 

 \begin{Lem}\label{Lem-condition-5}
 There exists a subset $T''$ of Dirichlet density one with respect to T, such that the residue representation $\overline{r}_{\ell}$ is absolutely irreducible for any $\ell\in T^{''}$. Equivalently, condition $(e)$ of Theorem~\ref{Thm-Frank-FM-conj} is true for $r_{\ell}$ with $\ell\in T^{''}$.
    \end{Lem}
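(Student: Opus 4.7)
The plan is to argue by contradiction: suppose the set $T\setminus T''$ of $\ell\in T$ for which $\overline{r}_\ell$ fails to be absolutely irreducible has positive Dirichlet density. Under the standing assumption that Proposition~\ref{Prop-1+2-type} is false, Lemma~\ref{det} gives $\det r_\ell=1$ for all $\ell\in T$, whence $\psi_\ell=\det\rho_\ell$. Therefore $\{\psi_\ell\}$ is automatically a weakly compatible system (it is the determinant of $\{\rho_\ell\}$), and the traces
\[
a_p := \tr r_\ell(\Frob_p) = \tr\rho_\ell(\Frob_p) - \psi_\ell(\Frob_p)
\]
are rational numbers independent of $\ell$ for $p$ outside the bad set $S$ of $\{\rho_\ell\}$. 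The regular and pure-of-weight-zero conditions force the Hodge-Tate weights of $\rho_\ell$ to be $\{-m,0,m\}$ for some fixed integer $m>0$; since $\det r_\ell=1$ forces the Hodge-Tate weights of $r_\ell$ to sum to zero, $r_\ell$ has weights $\{-m,m\}$, while $\psi_\ell$ has weight $0$ and is therefore of finite order by Lemma~\ref{Cor-dim1-repn}.

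For each $\ell$ in the presumed positive density subset, residual reducibility combined with $\det r_\ell=1$ forces
\[
\overline{r}_\ell^{\rm{ss}}\cong \overline{\chi}_\ell\oplus \overline{\chi}_\ell^{-1}
\]
for some character $\overline{\chi}_\ell: G_\Q\to \overline{\F}_\ell^{\times}$, together with the trace congruence
\[
a_p\equiv \overline{\chi}_\ell(\Frob_p)+\overline{\chi}_\ell(\Frob_p)^{-1}\pmod{\ell}
\]
for all unramified $p\ne \ell$. The next step is to bound the conductor of $\overline{\chi}_\ell$ uniformly in $\ell$: outside $\ell$, ramification is confined to the fixed set $S$; at $\ell$, the de Rham condition together with the fixed Hodge-Tate weights $\{-m,m\}$ pins down $\overline{\chi}_\ell|_{I_\ell}$ as a fixed integer power of a fundamental character (using Fontaine-Laffaille for $\ell$ sufficiently large compared to $2m$, which one may assume since density-zero sets may be discarded). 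A Hermite-Minkowski style pigeonhole then produces an infinite sub-family and a single finite-order Dirichlet character $\tau$ such that $\overline{\chi}_\ell=\overline{\tau}\cdot \overline{\varepsilon}_\ell^{\pm m}$ along that sub-family.

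Along such a sub-family the congruences upgrade to an equality of rational numbers
\[
a_p=\tau(p)p^{\pm m}+\tau(p)^{-1}p^{\mp m}
\]
for every unramified $p$, since the difference lies in $\Q$ and is divisible by infinitely many primes $\ell$. By Chebotarev this would identify $r_\ell^{\rm{ss}}\cong \tau\varepsilon_\ell^{\pm m}\oplus \tau^{-1}\varepsilon_\ell^{\mp m}$, contradicting the absolute irreducibility of $r_\ell$. The main obstacle is the Hermite-Minkowski/pigeonhole step, which is delicate because the characters $\overline{\chi}_\ell$ live in residue fields of varying characteristic and one must patch them using the system of congruences with the fixed traces $a_p$. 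One may bypass this technicality by appealing directly to Proposition~2.7 of \cite{Frank-Gee-irr-of-auto-Gal-repn}, which was already used in the proof of Lemma~\ref{Lem-condition-4} and is tailored to produce exactly such density-one residual irreducibility statements in the compatible-system setting.
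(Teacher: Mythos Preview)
The paper's proof is a one-line citation to Proposition~5.3.2 of \cite{BLGGT-2014}. Your preliminary observations are correct and indeed set up exactly what such a black-box result needs: under the standing assumption one has $\det r_\ell=1$, so $\psi_\ell=\det\rho_\ell$, and hence the traces $a_p=\tr\rho_\ell(\Frob_p)-\det\rho_\ell(\Frob_p)$ are rational integers independent of $\ell\in T$; together with the fixed Hodge--Tate weights $\{-m,m\}$ this is precisely the input BLGGT's proposition consumes.

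Your direct argument sketches what a proof from scratch would look like, but the Hermite--Minkowski/pigeonhole step you correctly flag as ``the main obstacle'' is the entire nontrivial content, and you do not carry it out. Patching the residual characters $\overline{\chi}_\ell$, which take values in $\overline{\F}_\ell^\times$ for varying $\ell$, into a single global Dirichlet character $\tau$ is delicate for exactly the reasons you mention, and the Fontaine--Laffaille input only controls the restriction to $I_\ell$; bounding the conductor at the fixed bad primes in $S$ uniformly in $\ell$ is an additional issue you do not address. So as a self-contained proof the argument has a genuine gap. As for your fallback citation: the paper does not use Calegari--Gee Proposition~2.7 here but rather BLGGT~5.3.2; the former was invoked in Lemma~\ref{Lem-condition-4} specifically for the ``not of dihedral type'' conclusion, and it is not clear it yields residual absolute irreducibility in the present setting.
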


\begin{proof}
This is a corollary of Proposition 5.3.2 of \cite{BLGGT-2014}.
\end{proof}

    Finally, by combing all the above discussions, we can complete the proofs to Proposition~\ref{Prop-1+2-type} and  Theorem~\ref{Thm-main-irreducibility}.
    
    \begin{pf}[Proof of Proposition \ref{Prop-1+2-type}]
       
    Assume that Proposition \ref{Prop-1+2-type} is false, then $T$ has positive density, and for every $\ell\in T$, $\rho_{\ell}$ has a 2-dimensional irreducible even subrepresentation $r_{\ell}:G_{\Q}\rightarrow \GL_{2}(\overline{\Q}_{\ell})$. Following the Lemmas~\ref{Lem-conditions-1-2}, \ref{Lem-condition-3}, \ref{Lem-condition-4} and \ref{Lem-condition-5} and the notations in their proofs, we can find a density one subset $\widetilde{T}:=T'\cap T''$ of $T$ such that $r_{\ell}$ satisfies all assumptions of Theorem \ref{Thm-Frank-FM-conj} for every $\ell\in \widetilde{T}$. Hence, for $\ell\in \widetilde{T}$, $r_{\ell}$ is modular, hence odd. However, this is impossible since by our setups. Thus by all above, we cannot assume $r_{\ell}$ is even for every $\ell\in T$, and this completes the proof of Proposition~\ref{Prop-1+2-type}
    \end{pf}
    
    \begin{pf}[Proof of Theorem~\ref{Thm-main-irreducibility}]
       This theorem follows by combining Proposition~\ref{Prop-1+1+1-type}, Proposition \ref{Odd-compatible}, and Proposition~\ref{Prop-1+2-type}.
    \end{pf}

\subsection{Proof of Corollary~\ref{Cor-Tate-conj-general}}\label{section-Tate-conj-general}
Following the notations of Corollary~\ref{Cor-Tate-conj-general}, by Theorem \ref{Thm-main-irreducibility} and condition $(\ast\ast)$ of Corollary \ref{Cor-Tate-conj-general}, we know that for a Dirichlet density one set of primes $\ell$, all 3-dimensional $G_{\Q}$-representations $\rho_{\ell, i}^{\rm{ss}}$ are absolutely irreducible. One can also easily argue that all the 2-dimensional $G_{\Q}$-representations $\rho_{\ell, i}^{\rm{ss}}$ are absolutely irreducible following exactly the same idea in the proof of Proposition \ref{Prop-1+1+1-type}. This means these transcendental parts have no contribution to the Galois invariant part of $H^2_{\rm{\acute{e}t}}(X_{\overline{\Q}}, \Q_{\ell}(1))$. Thus to prove Corollary~\ref{Cor-Tate-conj-general}, it is sufficient to show that 
\begin{equation}\label{eqn-cycle-map-NS}
    \NS(X)\otimes \Q_{\ell}\rightarrow (\NS(X_{\overline{\Q}})\otimes\Q_{\ell})^{G_{\Q}}
\end{equation}
is an isomorphism. Indeed this is true when $X$ is an elliptic surface over base curve $\mathbb{P}^1$. This result should be known for experts. But for the convenience of readers, we write a proof in this section. Our proof is not original, it follows the idea in \cite[Chapter~17, Section~3]{K3-lecture-notes}, especially its Remark~3.2.

\begin{Prop}\label{Prop-NS-descend}
    If $X\to \mathbb{P}^1_{\Q}$ is a surface defined over $\Q$ which has an elliptic fibration over $\mathbb{P}_{\Q}^1$ and admits a section, then $(\ref{eqn-cycle-map-NS})$ holds.
\end{Prop}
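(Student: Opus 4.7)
The plan is to reduce Proposition~\ref{Prop-NS-descend} to showing that the natural map $\NS(X) \to \NS(X_{\overline{\Q}})^{G_\Q}$ has torsion kernel and cokernel. Once this is established, I use that $\NS(X_{\overline{\Q}})$ is a finitely generated abelian group on which $G_\Q$ acts through a finite quotient, so taking $G_\Q$-invariants is exact on $\Q_\ell$-vector spaces and commutes with $-\otimes \Q_\ell$; torsion kernel and cokernel then die after tensoring with $\Q_\ell$, yielding the desired isomorphism $\NS(X)\otimes \Q_\ell \xrightarrow{\sim} (\NS(X_{\overline{\Q}})\otimes \Q_\ell)^{G_\Q}$.

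For the first step I apply the Hochschild--Serre spectral sequence
\[
E_2^{p,q} = H^p(G_\Q,\, H^q(X_{\overline{\Q}},\, \mathbb{G}_m)) \Longrightarrow H^{p+q}(X,\, \mathbb{G}_m)
\]
for \'etale cohomology. Using $H^0(X_{\overline{\Q}}, \mathbb{G}_m) = \overline{\Q}^{\times}$ and Hilbert~90 (so $H^1(G_\Q, \overline{\Q}^{\times}) = 0$), the five-term exact sequence becomes
\[
0 \to \Pic(X) \to \Pic(X_{\overline{\Q}})^{G_\Q} \to \mathrm{Br}(\Q) \to \mathrm{Br}(X).
\]
The hypothesis that $X\to \mathbb{P}^1_\Q$ admits a section over $\Q$ produces a $\Q$-rational point $P\in X(\Q)$ by composing the section with any $t \in \mathbb{P}^1(\Q)$. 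The pullback along $P$ is a retraction of $\mathrm{Br}(\Q)\to \mathrm{Br}(X)$, so the latter is injective, whence $\Pic(X) \to \Pic(X_{\overline{\Q}})^{G_\Q}$ is an isomorphism.

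Next I compare $\Pic$ with $\NS$ through the short exact sequence $0 \to \Pic^0 \to \Pic \to \NS \to 0$ on $X$ and on $X_{\overline{\Q}}$. In characteristic zero $\Pic^0_X$ is representable by an abelian variety over $\Q$ (the Picard variety of $X$), and the existence of $P\in X(\Q)$ gives $\Pic^0(X) = \Pic^0_X(\Q) = \Pic^0(X_{\overline{\Q}})^{G_\Q}$. Combining this with the isomorphism from the first step and applying the snake lemma to the resulting map of four-term exact sequences, I deduce that $\NS(X)\to \NS(X_{\overline{\Q}})^{G_\Q}$ is injective with cokernel embedded in $H^1(G_\Q, \Pic^0(X_{\overline{\Q}}))$.

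The final step is the classical fact that $H^1(G_\Q, A(\overline{\Q}))$ is torsion for any abelian variety $A/\Q$: any class is represented by a continuous cocycle that factors through a finite quotient $\Gal(K/\Q)$ with values in $A(K)$, which is finitely generated by Mordell--Weil, and the cohomology of a finite group with values in a finitely generated module is finite. Applying this to $A=\Pic^0_X$ makes the cokernel torsion, so it vanishes after $-\otimes \Q_\ell$ and the proposition follows. Overall the argument is a formal Galois descent; the only non-formal inputs are the existence of a $\Q$-rational point (immediate from the section) and the torsion of $H^1$ for abelian varieties over number fields, so I do not anticipate any serious technical obstacle.
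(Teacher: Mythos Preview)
Your proof is correct and shares the Hochschild--Serre backbone with the paper's argument, but the two diverge in how they pass from $\Pic$ to $\NS$ and in how they dispose of the obstruction term. The paper first observes that for an elliptic surface over $\mathbb{P}^1$ with a section the pullback $\Pic^0(\mathbb{P}^1_{\overline{\Q}})\to\Pic^0(X_{\overline{\Q}})$ is an isomorphism, hence $\Pic^0(X_{\overline{\Q}})=0$ and $\NS=\Pic$ on the nose; it then runs the spectral sequence for a \emph{finite} Galois extension $K/\Q$ over which all of $\NS(X_{\overline{\Q}})$ is realized, and simply notes that the obstruction $H^2(\Gal(K/\Q),K^\times)$ is torsion. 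You instead work with the absolute Galois group, use the $\Q$-point coming from the section to split $\mathrm{Br}(\Q)\to\mathrm{Br}(X)$ and obtain $\Pic(X)\cong\Pic(X_{\overline{\Q}})^{G_\Q}$ exactly, and then control the passage to $\NS$ by embedding the cokernel in $H^1(G_\Q,\Pic^0_X(\overline{\Q}))$, which you show is torsion via Mordell--Weil. Your route is more general---it applies verbatim to any smooth projective variety over a number field with a rational point---whereas the paper's is shorter in this specific situation precisely because the vanishing of $\Pic^0$ renders your last three steps unnecessary.
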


\begin{pf}
   Note that $\NS(X_{\overline{\Q}})$ is generated by the geometric divisors of $X_{\overline{\Q}}$.  This means that we can find a finite Galois extension $K/\Q$ such that all the generators are defined over $K$, i.e. 
    $$
    \NS(X_{\overline{\Q}})=\NS(X_{K}).
    $$
    Thus to prove the corollary, it is reduced to show 
    $$
    (\NS(X_K)\otimes \Q_{\ell})^{G_{\Q}}=\NS(X)\otimes \Q_{\ell}.
    $$
    To show this, we note that for elliptic surfaces with section over a base curve of genus $0$, we have $\NS(X_{\overline{\Q}})=\Pic(X_{\overline{\Q}})$, i.e. the linearly equivalent class and the algebraic equivalent class coincide by the fact that the pull back map from $\Pic^0(C_{\overline{\Q}})$ to $\Pic^0(X_{\overline{\Q}})$ is an isomorphism (here $C$ is the genus $0$ base curve of $X$). Note that $\Pic(X)=H^1(X, \G_m)$, and similarly for $\Pic(X_K)$ and consider the Hochschild-Serre spectral sequence 
    $$
    E_2^{p,q}=H^p(\Gal(K/\Q), H^q(X_K, \G_{m}))\Rightarrow H^{p+q}(X, \G_m)
    $$
    and apply the facts that $H^1(\Gal(K/\Q), K^*)=0$ (i.e. Hilbert 90) we get 
    $$
    0\to \Pic(X)\to \Pic(X_K)^{\Gal(K/\Q)}\to H^2(\Gal(K/\Q), K^*).
    $$
    According to the fact that $H^2(\Gal(K/\Q), K^*)$ is torsion, we have 
    $$
    (\Pic(X_K)\otimes \Q_{\ell})^{G_{\Q}}=\Pic(X)\otimes \Q_{\ell}.
    $$
    This finishes the proof.
\end{pf}

\section{The Tate conjecture of the surfaces of van Geemen and Top}\label{section-Tate-conj-vGT}

In this section, we will apply Corollary~\ref{Cor-Tate-conj-general} to the construction of van Geemen and Top \cite{GT-selfdual}. As a result, we show that the Tate conjecture is true for a sub family of their construction. 

More precisely, in Section \ref{section-surface-vGT} we recall the construction of a family of elliptic surfaces $\mathcal{S}_{a}$ in \cite{GT-selfdual}, and the properties of the corresponding weakly compatible system $\{ V_{\ell}(1)\}_{\ell}$ which is constructed by van Geemen and Top. (Recall that $V_{\ell}(1)\cong V_{\ell}\otimes \varepsilon_{\ell}$.) In Section \ref{section-Trace-Vl}, we apply some calculation tricks to work out several technical properties of the trace of $V_{\ell}$ which can be used in proving Theorem~\ref{Thm-con-Tate-conj-}. Finally, in Section \ref{section-pf-irr-vGT} we prove Theorem~\ref{Thm-con-Tate-conj-}.

\subsection{The surfaces of van Geemen and Top}\label{section-surface-vGT} 
We simply recall the construction of van Geemen and Top here. Readers who are interested in more details are referred to Section 2 and Section 5 of the original paper.

For each $a\in \Q\setminus\{ \pm 1\}$, considering the elliptic surface
    \begin{equation}\label{eqn-def-eqn-E}
         \mathcal{E}_{a}: Y^2=X\left(X^2+2\left(\frac{a+1}{t^2}+a\right)X+1\right).
    \end{equation}
    \begin{Rmk}
    The original surface in their paper has two parameters $a$ and $s$, while when $s \in \Q^{*}$, we can parameterize the equation to get the form as above.
    \end{Rmk}
    
    Define the elliptic surfaces $\mathcal{X}_{a}$ and $\mathcal{S}_{a}$ as fiber products of $\mathcal{E}_{a}$ which satisfies the following Cartesian diagram

\[
\begin{tikzcd}
\mathcal{S}_{a} \arrow[r] \arrow[d]
& \mathcal{X}_{a} \arrow[r]  \arrow[d]  & \mathcal{E}_{a}  \arrow[d]\\
\mathbb{P}^1_{z} \arrow[r, "j" ]
& \mathbb{P}^1_{u} \arrow[r, "h"] & \mathbb{P}^1_{t}
\end{tikzcd}
\] 
where $j: z\mapsto u=(z^2-1)/z$ and $h: u\mapsto t=(u^2-4)/(4u)$. One can see that $\mathcal{S}_a$ is not isotrivial by computing its $j$-invariant. In this and  the following sections, we will denote by $\mathcal{E}_{a,t}$ (resp. $\mathcal{X}_{a,u}$, $\mathcal{S}_{a,z}$) the fiber above $t\in \mathbb{P}^1(\overline{\Q})$ (resp. $u$, $z$) of the surface $\mathcal{E}_a$ (resp. $\mathcal{X}_a$, $\mathcal{S}_a$). 

\begin{comment}
However, to simplify the notations and to emphasize on one of the indexes, we will usually drop one or both of the sub-indexes as long as there is no danger of confusion by doing this. 
\end{comment}

Considering the geometric action on $\mathbb{P}^1_z$ defined by 
$$
\sigma: z\mapsto \frac{z+1}{-z+1}
$$
which has order $4$. As an element of the Galois group of $\mathbb{P}^1_{z}$ over $\mathbb{P}^1_{t}$, $\sigma$ is totally ramified over $t=\pm i$ and this is all the ramifications. Also, one sees that $j:\mathbb{P}_z^1\to  \mathbb{P}_u^1$ defined above identifies $\mathbb{P}_u^1$ with the quotient space $\mathbb{P}_z^1/\langle \sigma^2 \rangle$. This means that both of the two \'etale cohomology $H^2_{\rm{\acute{e}t}}(({\mathcal{S}_a})_{\overline{\Q}}, \Q_{\ell})$ and $H^2_{\rm{\acute{e}t}}((\mathcal{X}_{a})_{\overline{\Q}}, \Q_{\ell})$ are stable under the $\G_{\Q}\times \langle\sigma\rangle$-action. Moreover, if we denote by $A_{\ell}({\mathcal{S}_a})$ the $\Q_{\ell}$-subspace in $H^2_{\rm{\acute{e}t}}((\mathcal{S}_a)_{\overline{\Q}}, \Q_{\ell})$ spanned by all components of bad fibers of ${\mathcal{S}_a}\to \mathbb{P}^1_{\overline{\Q}}$, then $A_{\ell}({\mathcal{S}_a})$ is also $\G_{\Q}\times \langle\sigma\rangle$-stable. 

Define 
$$
W_{\ell}:=H^2_{\rm{\acute{e}t}}((\mathcal{S}_a)_{\overline{\Q}}, \Q_{\ell})/(H^2_{\rm{\acute{e}t}}((\mathcal{X}_a)_{\overline{\Q}}, \Q_{\ell})+ A_{\ell}({\mathcal{S}_a})).
$$
Then $W_{\ell}$ has dimension $6$ and is also equipped with a $\G_{\Q}\times \langle\sigma\rangle$-action. The $\sigma$ has two eigenvalues $\pm i$ on $W_{\ell}$. We take $V_{\ell}$ (resp. $\overline{V}_{\ell}$) to be the 3-dimensional eigenspace corresponding to eigenvalue $i$ (resp. $-i$).
\begin{Rmk}
From now on, we work on the representation $V_{\ell}$. But all the following results are also true for $\overline{V}_{\ell}$ due to the isomorphism between $\overline{V}_{\ell}$ and $V_{\ell}$.
\end{Rmk}

\begin{Prop}\cite[Proposition~5.2]{GT-selfdual}
    For $a\in \Q\setminus \{\pm1\}$, the corresponding $V_{\ell}(1)$ is self-dual (up to semi-simplification).
\end{Prop}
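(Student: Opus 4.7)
The plan is to deduce the self-duality from Poincaré duality on $\mathcal{S}_a$, combined with an auxiliary geometric involution which swaps the two $\sigma^*$-eigenspaces on $W_\ell$.

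First, I would invoke Poincaré duality: since $\mathcal{S}_a$ is a smooth projective surface, the cup product yields a $G_\Q$-equivariant symmetric perfect pairing
\[
H^2_{\rm{\acute{e}t}}((\mathcal{S}_a)_{\overline{\Q}}, \Q_\ell(1)) \otimes H^2_{\rm{\acute{e}t}}((\mathcal{S}_a)_{\overline{\Q}}, \Q_\ell(1)) \longrightarrow \Q_\ell.
\]
Since $\sigma$ is an automorphism of $\mathcal{S}_a$, this pairing is $\sigma^*$-invariant, so for $v, w \in V_\ell$ (both $i$-eigenvectors) one computes $\langle v, w\rangle = \langle \sigma^* v, \sigma^* w\rangle = i^2\langle v, w\rangle = -\langle v, w\rangle$, forcing $\langle v, w\rangle = 0$. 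Thus $V_\ell$ is isotropic; since $\dim V_\ell = 3$ and $\dim W_\ell = 6$, the induced pairing gives a perfect duality between $V_\ell(1)$ and $\overline{V}_\ell(1)$, i.e., $\overline{V}_\ell(1) \cong V_\ell(1)^*$.

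Second, I would construct a $G_\Q$-equivariant isomorphism $V_\ell \cong \overline{V}_\ell$ using the involution $\tau\colon z\mapsto -z$ on $\mathbb{P}^1_z$. The identities $j(-z) = -j(z)$ and $h(-u) = -h(u)$ show that $\tau$ descends via $j$ and $h$ to the involution $t \mapsto -t$ of $\mathbb{P}^1_t$, and this is a $\Q$-automorphism of $\mathcal{E}_a$ because the defining equation \eqref{eqn-def-eqn-E} only involves $t^2$. Hence $\tau$ lifts to a $\Q$-automorphism of $\mathcal{S}_a$. A direct computation on $\mathbb{P}^1_z$ yields $\tau\sigma\tau^{-1} = \sigma^{-1}$, so $\tau^*$ exchanges the $i$- and $(-i)$-eigenspaces of $\sigma^*$. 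Moreover, $\tau^*$ preserves both $H^2_{\rm{\acute{e}t}}((\mathcal{X}_a)_{\overline{\Q}}, \Q_\ell)$ (via the induced involution $u\mapsto -u$ of $\mathcal{X}_a$) and $A_\ell(\mathcal{S}_a)$ (as $\tau$ permutes bad fibers), so it descends to an involution of $W_\ell$ and delivers the desired $G_\Q$-equivariant isomorphism $V_\ell \cong \overline{V}_\ell$. Combining the two ingredients, $V_\ell(1) \cong \overline{V}_\ell(1) \cong V_\ell(1)^*$.

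The main technical obstacle I foresee is to check carefully that the Poincaré pairing descends to a \emph{non-degenerate} pairing between $V_\ell$ and $\overline{V}_\ell$ in the quotient $W_\ell$, rather than merely to a possibly degenerate one. The clean way to handle this is to identify $W_\ell$ with a $G_\Q\times\langle\sigma\rangle$-stable subquotient of the transcendental part of $H^2_{\rm{\acute{e}t}}((\mathcal{S}_a)_{\overline{\Q}}, \Q_\ell(1))$, on which the cup product pairing is automatically perfect because the Néron--Severi part is a non-degenerate orthogonal summand for the intersection form. If this geometric identification is too delicate to make directly, one falls back to the level of traces of Frobenius, which is exactly what the "up to semi-simplification" qualifier allows and which is what is used in the sequel.
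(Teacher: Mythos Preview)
The paper does not supply its own proof of this proposition: it is simply quoted from \cite[Proposition~5.2]{GT-selfdual}. So there is nothing in the present paper to compare your argument against; you are in effect reconstructing the original van Geemen--Top argument.

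Your two-step strategy---Poincar\'e duality to obtain $\overline{V}_\ell(1)\cong V_\ell(1)^*$, then the involution $\tau\colon z\mapsto -z$ to obtain $V_\ell\cong\overline{V}_\ell$---is correct and is the natural approach. Your check that $\tau\sigma\tau^{-1}=\sigma^{-1}$ on $\mathbb{P}^1_z$, so that $\tau^*$ exchanges the $\pm i$-eigenspaces of $\sigma^*$, is right, as is the observation that $\tau$ covers $u\mapsto -u$ on $\mathbb{P}^1_u$ and $t\mapsto -t$ on $\mathbb{P}^1_t$ (the defining equation~\eqref{eqn-def-eqn-E} depending only on $t^2$), and hence lifts to a $\Q$-automorphism of $\mathcal{S}_a$ preserving both $A_\ell(\mathcal{S}_a)$ and the image of $H^2_{\rm{\acute{e}t}}((\mathcal{X}_a)_{\overline{\Q}},\Q_\ell)$.

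The only soft spot is the one you already identify: making the cup-product pairing descend \emph{non-degenerately} to the quotient $W_\ell$. The subspace $H^2_{\rm{\acute{e}t}}((\mathcal{X}_a)_{\overline{\Q}},\Q_\ell)+A_\ell(\mathcal{S}_a)$ being quotiented out is not tautologically its own orthogonal complement, so this requires an extra word. One clean route is to use the orthogonal decomposition of $H^2$ into the $(\pm1)$-eigenspaces of $(\sigma^2)^*$ (the pairing being $\sigma^*$-invariant), identify the $(+1)$-eigenspace with the image of $H^2_{\rm{\acute{e}t}}((\mathcal{X}_a)_{\overline{\Q}},\Q_\ell)$ up to algebraic classes, and then peel off the remaining algebraic classes, which form a non-degenerate sublattice. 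Alternatively, as you note, one may bypass this entirely by working at the level of characteristic polynomials of Frobenius: the equality $\tr\rho_\ell(\Frob_p)=\tr\rho_\ell(\Frob_p^{-1})$ follows from the combination of Poincar\'e duality and $\tau^*$ already at the level of $H^2$, and this is exactly the ``up to semi-simplification'' statement. That is all the paper uses downstream.
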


We have the following facts about the geometry of $\mathcal{E}_{a}, \mathcal{X}_a$ and $\mathcal{S}_a$, which follow from the proof of Proposition 4.2 and Remark 5.3 in \cite{GT-selfdual}. 
\begin{Lem}\label{Lem-Hodge-num-vGT}
In the above construction, 
\begin{enumerate}
    \item $\mathcal{E}_{a}$ are rational elliptic surfaces.
    \item $\mathcal{X}_{a}$ are $K3$ surfaces with Picard number at least $19$.
    \item $\mathcal{S}_{a}$ has (complex) Hodge numbers $h^{2,0}=h^{0,2}=3$, $h^{1,1}=40$. And the Picard number for it is at least $37$.
\end{enumerate}
\end{Lem}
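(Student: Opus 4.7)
The plan is to handle the three surfaces in order, using standard invariants of elliptic fibrations (discriminant degree, Euler number, $\chi(\mathcal{O})$) to control the Hodge data, and an explicit enumeration of divisors for the Picard-number bounds.

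For (1), I would begin by clearing denominators in (\ref{eqn-def-eqn-E}) via the substitution $(X,Y)=(X'/t^2,Y'/t^3)$, which yields the minimal Weierstrass form $Y'^2 = X'^3 + (2at^2+2(a+1))X'^2 + t^4 X'$ over $\mathbb{P}^1_t$, with $\deg a_2 = 2$ and $\deg a_4 = 4$. A direct computation of the discriminant (together with the behaviour check at $t=\infty$) shows that $\deg\Delta = 12$; since for a relatively minimal elliptic surface $\deg\Delta = 12\chi(\mathcal{O})$, this forces $\chi(\mathcal{O}_{\mathcal{E}_a})=1$, and hence $\mathcal{E}_a$ is a rational elliptic surface.

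For (2) and (3), the key observation is that the two base changes $h: u \mapsto (u^2-4)/(4u)$ and $j: z \mapsto (z^2-1)/z$ are degree-$2$ covers of $\mathbb{P}^1$'s whose branch loci $\{t=\pm i\}$ and $\{u=\pm 2i\}$ consist of smooth fibres of $\mathcal{E}_a$ and $\mathcal{X}_a$ respectively (this is visible from the discriminant factorisation and its iterate under $h$, at least for $a\neq\pm 1$). The fibre products $\mathcal{X}_a$ and $\mathcal{S}_a$ are therefore smooth, and the Euler characteristic is multiplicative under such base changes since $e(\text{smooth elliptic fibre})=0$: $e(\mathcal{X}_a) = 2\,e(\mathcal{E}_a) = 24$ and $e(\mathcal{S}_a) = 2\,e(\mathcal{X}_a) = 48$, giving $\chi(\mathcal{O}_{\mathcal{X}_a})=2$ and $\chi(\mathcal{O}_{\mathcal{S}_a})=4$. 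Combined with $q=g(\mathbb{P}^1)=0$ and the canonical bundle formula $K = (\chi(\mathcal{O})-2)F$, I obtain $K_{\mathcal{X}_a}=0$, so $\mathcal{X}_a$ is a K3 surface; and $p_g(\mathcal{S}_a) = \chi(\mathcal{O}_{\mathcal{S}_a})-1 = 3$, so $h^{2,0}(\mathcal{S}_a) = h^{0,2}(\mathcal{S}_a) = 3$, and Noether's formula yields $h^{1,1}(\mathcal{S}_a) = 48 - 2 - 6 = 40$.

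The Picard-number bounds will come from explicit lists of divisors: the class of a general fibre, the zero section (together with any additional Mordell-Weil sections), and the non-identity components of the reducible fibres. I would read off the Kodaira types of the reducible fibres of $\mathcal{E}_a$ by running the Tate algorithm on the minimal model above, and then track how those types transform under the two degree-$2$ base changes $h$ and $j$. Applying the Shioda-Tate formula to the resulting list will produce at least $19$ independent classes in $\NS(\mathcal{X}_a)$ and at least $37$ in $\NS(\mathcal{S}_a)$. The main obstacle in carrying out this plan is precisely this Tate-algorithm bookkeeping: enumerating all reducible fibres of $\mathcal{E}_a$, determining their Kodaira types, tracking how each type transforms under $h$ and then under $j$, and finally verifying linear independence of the resulting classes in N\'eron-Severi. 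Once those explicit fibre tables are in hand, every other step is a routine application of Noether's formula and the Shioda-Tate enumeration.
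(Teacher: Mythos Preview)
Your overall strategy is sound and, unlike the paper (which simply cites the proof of Proposition~4.2 and Remark~5.3 in \cite{GT-selfdual} without further argument), you actually sketch a direct verification. The computations in part~(1) and the Hodge-number deductions from $e$ and $\chi(\mathcal{O})$ are correct.

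There is, however, one genuine error in your argument for parts~(2) and~(3). You assert that the branch loci $\{t=\pm i\}$ of $h$ and $\{u=\pm 2i\}$ of $j$ lie over \emph{smooth} fibres, and then invoke $e(\text{smooth fibre})=0$ to conclude multiplicativity of the Euler number. This is false: the discriminant of the minimal model you wrote down is $64(a+1)t^{8}(t^{2}+1)\bigl((a-1)t^{2}+(a+1)\bigr)$, so $t=\pm i$ are $I_{1}$ fibres of $\mathcal{E}_{a}$ (cf.\ Lemma~\ref{lem-goe-fiber}), and the pullbacks $u=\pm 2i$ are $I_{2}$ fibres of $\mathcal{X}_{a}$. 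The base changes are ramified precisely over these singular fibres.

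Fortunately your conclusion survives, for a different reason: every singular fibre here is of type $I_{n}$, and an $I_{n}$ fibre pulled back through a point of ramification index $e$ becomes an $I_{ne}$ fibre. Since the local Euler contribution of $I_{m}$ is $m$, each local contribution scales by the ramification index and hence the total Euler number scales by the degree. Thus $e(\mathcal{X}_{a})=24$ and $e(\mathcal{S}_{a})=48$ still hold, and the Hodge numbers follow as you wrote. This correction also feeds directly into your Shioda--Tate count: the reducible fibres of $\mathcal{X}_{a}$ are $2\times I_{8}$ and $2\times I_{2}$ (not $4\times I_{1}$ at the branch points), giving $\sum(m_{v}-1)=16$; to reach $19$ you then need the rank-one Mordell--Weil section that $\mathcal{E}_{a}$ already carries as a rational elliptic surface (since $10=2+7+\mathrm{rk}\,\mathrm{MW}$ forces $\mathrm{rk}\,\mathrm{MW}(\mathcal{E}_{a})=1$). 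The same section, together with $4\times I_{8}$ and $2\times I_{4}$ on $\mathcal{S}_{a}$, yields $2+34+1=37$.
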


\subsection{Trace of \texorpdfstring{$V_{\ell}(1)$}{}}\label{section-Trace-Vl}
Now we want to deduce a trace formula of $V_{\ell}(1)$ for future use. Note that the ramified primes of $V_{\ell}$ are the divisors of $2(1+a)(1-a)$ (for notation, see Section~\ref{Section-notation}). For a fixed prime integer $p$ which is not dividing $2(1+a)(1-a)$, and let $\mathfrak{p}$ be a prime ideal in a number field $K$ and $\mathfrak{p}$ be lying above $p$. Let $q=Nm_{K/\Q}(\mathfrak{p})=p^r$ and $\rho_{\ell}:G_{\Q}\rightarrow \GL_{3}(\overline{\Q}_{\ell})$ be the semi-simplification of $V_{\ell}(1)$. Moreover, we take the notation $\overline{\mathcal{S}}_{a,t}$ (resp. $\overline{\mathcal{X}}_{a,t}$, $\overline{\mathcal{E}}_{a,t}$) to represent the reduction of ${\mathcal{S}}_{a,t}$ (resp. ${\mathcal{X}}_{a,t}$, ${\mathcal{E}}_{a,t}$) with respect to $p$.  Then the formula to compute the trace of corresponding geometric Frobenius $\Frob_{\mathfrak{p}}$ attached to $V_{\ell}$ is (see \cite[Theorem~3.5]{GT-selfdual} for more details)

\begin{equation}\label{eqn-trace-formula}
    \tr\rho_{\ell}\varepsilon_{\ell}^{-1}(\Frob_{\mathfrak{p}})=\frac{\#\overline{\mathcal{S}}_a(\F_q)-\#\overline{\mathcal{X}}_a(\F_q)}{2}.
\end{equation}
With this formula, we can compute the trace of $\Frob_{\mathfrak{p}}$ modulo an integer $m$. In order to do this, we compute $\#\overline{\mathcal{S}}_a(\F_q)-\#\overline{\mathcal{X}}_a(\F_q)$ fiberwisely with respect to $t\in \mathbb{P}^1_{\F_q}$. First, the following lemma about the fibers of $\mathcal{E}_a$ over algebraic closed field $\overline{\F}_q$ will be useful. 

\begin{Lem}\label{lem-goe-fiber}
With the above notations, assume $p\nmid 2(1+a)(1-a)$.  Let the parameter go over $ \mathbb{P}^1_{\overline{\F}_q}$, then fiber $\overline{\mathcal{E}}_{a,t}$ is smooth if and only if $t\neq 0, \pm \sqrt{-1},\pm \sqrt{\frac{1+a}{1-a}}$. 
\end{Lem}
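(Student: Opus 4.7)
The plan is to identify the singular fibers via the discriminant of the Weierstrass equation and then handle the distinguished points $t = 0$ and $t = \infty$ by passing to a suitable minimal model.

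First I would rewrite \eqref{eqn-def-eqn-E} in the form $Y^2 = X(X^2 + B(t)X + 1)$ with $B(t) = 2\bigl(\tfrac{a+1}{t^2} + a\bigr)$. Since $p \nmid 2$, over the affine locus $t \neq 0, \infty$ the fiber $\overline{\mathcal{E}}_{a,t}$ is singular if and only if the cubic $X(X^2 + B(t)X + 1)$ has a repeated root; as its constant term is $1$, the root $X = 0$ is always simple, so the condition reduces to $B(t)^2 - 4 = 0$. A direct factorization yields
\[
B(t)^2 - 4 = \frac{4(a+1)\,(1+t^2)\,\bigl((a+1) + (a-1)t^2\bigr)}{t^4},
\]
whose zeros on $\mathbb{A}^1_{\overline{\F}_q} \setminus \{0\}$ are exactly $t = \pm\sqrt{-1}$ and $t = \pm\sqrt{(1+a)/(1-a)}$. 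The hypothesis $p \nmid 2(1+a)(1-a)$ ensures that these four points are well-defined, nonzero, and pairwise distinct modulo $p$.

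Next, for the points $t = 0$ and $t = \infty$ I would pass to minimal Weierstrass models. For $t = 0$, the substitution $X = X'/t^2$, $Y = Y'/t^3$ transforms the equation into
\[
(Y')^2 = X'\bigl((X')^2 + (2(a+1) + 2at^2) X' + t^4\bigr),
\]
whose discriminant is proportional to $t^8(1+t^2)\bigl((a+1)+(a-1)t^2\bigr)$; the factor $t^8$ shows that the fiber above $t=0$ is singular. For $t = \infty$, the substitution $s = 1/t$ together with an analogous scaling gives a Weierstrass equation whose discriminant at $s = 0$ reduces, up to a unit in $\overline{\F}_q$, to $(a+1)(a-1)$, which is nonzero by our hypothesis on $p$; hence the fiber at infinity is smooth.

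The combined list of singular values of $t$ is therefore exactly $\{0, \pm\sqrt{-1}, \pm\sqrt{(1+a)/(1-a)}\}$, proving the lemma. The step requiring the most care is the passage to minimal models at $t = 0$ and $t = \infty$: one must verify that the rescaling truly produces a minimal model in a neighborhood of each point so that the vanishing (respectively nonvanishing) of the discriminant correctly reflects the geometry of the fiber. This is the only genuine obstacle, and it is resolved by the explicit computations above together with the hypothesis that $p \nmid 2(1+a)(1-a)$.
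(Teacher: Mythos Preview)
Your proof is correct and follows essentially the same approach as the paper: both compute the discriminant of the Weierstrass equation and read off the singular fibers. The paper simply records the discriminant as the rational function $\frac{64(a+1)(t^2+1)((a-1)t^2+(a+1))}{t^4}$ and declares the result, leaving the interpretation at $t=0$ and $t=\infty$ implicit; you make this explicit by passing to minimal models at those points, which is a welcome clarification. One minor remark: at $t=\infty$ no rescaling is actually needed, since after $s=1/t$ the coefficient $B$ becomes $2((a+1)s^{2}+a)$, already regular at $s=0$, with fiber $Y^{2}=X(X^{2}+2aX+1)$ of discriminant $64(a^{2}-1)\neq 0$.
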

\begin{pf}
This follows directly from that the discriminant of the fibration $\overline{\mathcal{E}}_{a,t}$ is $$\frac{64(a+1)(t^2+1)((a-1)t^2+(a+1))}{t^4} .$$
\end{pf}

In the following, we will say a fiber $\overline{\mathcal{E}}_{a,t}$ is \emph{special} if $t\in \{ 0, \pm \sqrt{-1},\pm \sqrt{\frac{1+a}{1-a}}, \infty\}$, otherwise, we say the corresponding fiber is \emph{general}. Note in particular that  $\overline{\mathcal{E}}_{a,\infty}$ is special even it is smooth. Recall the construction at the beginning of Section~\ref{section-surface-vGT}. We have degree $2$ morphisms $j: \mathbb{P}^1_z\to \mathbb{P}_u^1$ and $h: \mathbb{P}^1_u\to \mathbb{P}^1_t$. We will still keep using the same notations to refer their reduction to finite fields $\F_q$. 

Due to the way we compute $\tr\rho_{\ell}\varepsilon_{\ell}^{-1}(\Frob_{\mathfrak{p}})$, one sees that for a fiber $\overline{\mathcal{E}}_{a,t}$ only when $\sqrt{t^2+1}\in \F_q$, each component its pulling back via $h$ is defined over $\F_q$ and the right hand side of \eqref{eqn-trace-formula} has contribution to the trace. In this case, we say that $\overline{\mathcal{E}}_{a,t}$ has contribution to \eqref{eqn-trace-formula}. In the followings, we will discuss the contribution of general and special fibers respectively. Since we are only interested in the nontrivial contribution from the fibers, we will assume $\sqrt{t^2+1}\in \F_q$. 

For a general fiber $\overline{\mathcal{E}}_{a,t}$, under our assumption above, we know $h^{-1}(t)\subset \F_q$. Now the two fibers $\overline{\mathcal{X}}_{a,u}$ with $u\in h^{-1}(t)$ are defined over $\F_{q}$, and can contribute to the trace formula \eqref{eqn-trace-formula}. In addition, every $\overline{\mathcal{X}}_{a,u}$ is isomorphic to $\overline{\mathcal{E}}_{a,t}$, hence their contribution is  $2\#\overline{\mathcal{E}}_{a,t}(\F_q)/2=  \#\overline{\mathcal{E}}_{a,t}(\F_q)$ in \eqref{eqn-trace-formula}.  Similarly, when $(j\circ h)^{-1}(t)\subset \F_q$, the four fibers $\overline{\mathcal{S}}_{a,z}$ with $z\in (j\circ h)^{-1}(t)$ contribute $4\#\overline{\mathcal{E}}_{a,t}(\F_q)/2=  2\#\overline{\mathcal{E}}_{a,t}(\F_q)$ to \eqref{eqn-trace-formula}. Moreover, due to the defining equation (\ref{eqn-def-eqn-E}) we have $2|\#\overline{\mathcal{E}}_{a,t}(\F_{q})$, and due to the symmetry $\overline{\mathcal{E}}_{t}\simeq \overline{\mathcal{E}}_{a,-t}$ we have  $\#\overline{\mathcal{E}}_{a,t}(\F_{q})=\#\overline{\mathcal{E}}_{a,-t}(\F_{q})$. Hence for general $t\in \F_q$ we have 
$$
4|\#\overline{\mathcal{E}}_{a,t}(\F_{q})+\#\overline{\mathcal{E}}_{a,-t}(\F_{q}).
$$
In particular, when $\sqrt{2(1+a)}\in \F_q$, we have a $4$-torsion point $\left(1, \frac{\sqrt{2(1+a)(t^2+1)}}{t}\right)$ in each of $\overline{\mathcal{E}}_{a,\pm t}(\F_q)$, thus in this case, 
$
8|\#\overline{\mathcal{E}}_{a,t}(\F_{q})+\#\overline{\mathcal{E}}_{a,-t}(\F_{q}).
$

Now we discuss the special fibers.  
    \begin{enumerate}
        \item When $t=0$, the corresponding fiber is 
        $$
        \overline{\mathcal{E}}_{a,0}: Y^2=X^2(X+2(a+1)).
        $$
        Hence one can see that the singular point is $(0,0)$ with two tangent lines $Y=\pm \sqrt{2(a+1)}X$. Moreover, the $u$-fibers above $t=0$ are $u=\pm 2$, and the $z$-fibers are $z=\pm1 \pm \sqrt{2}$. Hence we can tabular the contribution of $\overline{\mathcal{E}}_{a,0}$ in Table~\ref{Tab-0-fiber}. In each of the first two columns of this table, we write $1$ to indicate the element at the top of this column is in $\F_q$, and $-1$ otherwise. If we leave the block unfilled, it means that the contribution is independent with the value. At the last column, we list the contribution of each situation.

        \begin{table}[h!]
            \centering
            \begin{tabular}{ |c|c | c|}
            \hline
            $\sqrt{2(1+a)}$  & $\sqrt{2}$    & contribution \\ \hline 
            $1$ & $1$  & $q$ \\ \hline
            $1$ & $-1$ & $-q$ \\ \hline 
            $-1$ & $1$ & $q+2$ \\ \hline 
            $-1$ & $-1$ & $-q+2$ \\ \hline
            \end{tabular}
            \caption{Contribution of $t=0$.}
            \label{Tab-0-fiber}
        \end{table}
        
        \item When $t=\pm i$, since the corresponding $u$-fibers are $u=\pm 2i$ and the $z$-fibers are $z=\pm i$, we know that $\overline{\mathcal{E}}_{a,\pm i}$ have no contribution.
        
        \item When $t=\pm \sqrt{\frac{1+a}{1-a}}$, the corresponding  fiber is 
        $$
        \overline{\mathcal{E}}_{a,t}: Y^2=X(X+1)^2.
        $$
        The singular point is $(-1,0)$, with two tangent lines $Y=\pm i (X+1)$. Then the $u$-fibers above are $u=\pm 2\sqrt{\frac{1+a}{1-a}}\pm 2\sqrt{\frac{2}{1-a}}$. The the $z$-fibers are $z=\pm\sqrt{\frac{1+a}{1-a}}\pm \sqrt{\frac{2}{1-a}}\pm \sqrt{\frac{4\pm 2\sqrt{2(1+a)}}{1-a}}$. Hence by the same manner as above, we have Table \ref{Tab-sp-singular-fiber}.
        
\begin{table}[h!]
\centering
\begin{tabular}{ |c|c|c| c| c| } 
 \hline
 $\sqrt{\frac{1+a}{1-a}}$ & $\sqrt{\frac{2}{1-a}}$ & $\sqrt{\frac{4\pm 2\sqrt{2(1+a)}}{1-a}}$ & $i$ & contribution  \\  \hline
 $1$ & $1$ & $1$ & $1$ & $2q$ \\ \hline
 $1$ & $1$ & $1$ & $-1$ & $2(q+2)$ \\ \hline
 $1$ & $1$ & $-1$ & $1$ & $-2q$ \\ \hline
 $1$ & $1$ & $-1$ & $-1$ & $-2(q+2)$ \\ \hline
  & $-1$ &  &  & $0$ \\ \hline
 $-1$ &  &  &  & $0$ \\ \hline
\end{tabular}
\caption{Contribution of $t=\pm \sqrt{\frac{1+a}{1-a}}$.}
\label{Tab-sp-singular-fiber}
\end{table}

\item When $t=\infty$, the corresponding fiber is smooth (recall $a\neq \pm 1$). In particular, we have 
$$
\overline{\mathcal{E}}_{a,\infty}: Y^2=X(X^2+2aX+1). 
$$
When $\sqrt{2(1+a)}\in \F_q$, we have a $4$-torsion point $(1, \sqrt{2(1+a)})$. Moreover, when $\sqrt{a^2-1}\in \F_q$, then we have three distinct $2$-torsion points over $\F_q$. The $u$-fibers above are $u=0, \infty$, and the $z$-fibers are $z=0,\infty, \pm 1$. Hence 
\begin{enumerate}
    \item When $\sqrt{2(1+a)}\notin \F_q$, $2| \#\overline{\mathcal{E}}_{a,\infty}(\F_q)$ and  $\overline{\mathcal{E}}_{a,\infty}$ contributes $\#\overline{\mathcal{E}}_{a,\infty}(\F_q)$.
    
    \item When $\sqrt{2(1+a)}\in \F_q$, $4| \#\overline{\mathcal{E}}_{a,\infty}(\F_q)$ and  $\overline{\mathcal{E}}_{a,\infty}$ contributes $\#\overline{\mathcal{E}}_{a,\infty}(\F_q)$.
    
    \item When $\sqrt{a^2-1}\in \F_q$, $4| \#\overline{\mathcal{E}}_{a,\infty}(\F_q)$ and  $\overline{\mathcal{E}}_{a,\infty}$ contributes $\#\overline{\mathcal{E}}_{a,\infty}(\F_q)$.
    
    \item When $\sqrt{2(1+a)}\in \F_q$ and $\sqrt{a^2-1}\in \F_q$, $8| \#\overline{\mathcal{E}}_{a,\infty}(\F_q)$ and  $\overline{\mathcal{E}}_{a,\infty}$ contributes $\#\overline{\mathcal{E}}_{a,\infty}(\F_q)$.
    
\end{enumerate}
       
    \end{enumerate}
    
\begin{Prop}\label{Prop-tr-mod8-p^2}
    Let $p\nmid 2\ell(1+a)(1-a)$ and let $\left(\frac{2(1+a)}{p}\right)=\left(\frac{2(1-a)}{p}\right)=-1$. Take $q=p^2$, then 
    $$
    \tr\rho_{\ell}\varepsilon_{\ell}^{-1}(\Frob_{\mathfrak{p}})=-q\ (\bmod\ 8).
    $$
\end{Prop}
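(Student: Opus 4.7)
The plan is to apply the fiber-by-fiber count of Section~\ref{section-Trace-Vl} and reduce modulo $8$: identify which row of Tables~\ref{Tab-0-fiber} and \ref{Tab-sp-singular-fiber} is activated by the quadratic residue hypotheses, handle $t=\infty$ via case~(d) of the $t=\infty$ analysis, and show that all remaining general fibers contribute $0\pmod 8$.

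First I would determine which square roots lie in $\F_q=\F_{p^2}$. Since $\F_{p^2}$ contains every square root of an element of $\F_p$, the elements $\sqrt{2}$, $i$, $\sqrt{2(1\pm a)}$, and $\sqrt{\tfrac{2}{1-a}}$ all lie in $\F_q$; moreover the hypothesis $\bigl(\tfrac{2(1+a)}{p}\bigr)=\bigl(\tfrac{2(1-a)}{p}\bigr)=-1$ combined with multiplicativity of Legendre symbols yields $\bigl(\tfrac{(1+a)/(1-a)}{p}\bigr)=1$ and $\bigl(\tfrac{1-a^2}{p}\bigr)=1$, so $\sqrt{\tfrac{1+a}{1-a}}\in \F_p$ and $\sqrt{a^2-1}\in \F_q$. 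The one nontrivial membership is to show $\sqrt{\tfrac{4\pm 2\sqrt{2(1+a)}}{1-a}}\notin \F_q$. For this I would invoke the criterion that $x\in \F_{p^2}^{\times}$ is a square if and only if $N_{\F_{p^2}/\F_p}(x)\in \F_p^{\times}$ is a square, and compute
\[
    N\!\left(\frac{4+2\sqrt{2(1+a)}}{1-a}\right)=\frac{16-8(1+a)}{(1-a)^2}=\frac{8}{1-a},
\]
whose Legendre symbol equals $\bigl(\tfrac{2(1-a)}{p}\bigr)=-1$.

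With these memberships established, I would read off contributions from the tables. For $t=0$ we are in row $1$ of Table~\ref{Tab-0-fiber}, contribution $q$. For $t=\pm i$ the contribution is $0$. For $t=\pm\sqrt{\tfrac{1+a}{1-a}}$, row $3$ of Table~\ref{Tab-sp-singular-fiber} applies, giving a combined contribution $-2q$. For $t=\infty$, both $\sqrt{2(1+a)}$ and $\sqrt{a^2-1}$ lie in $\F_q$, so case~(d) of the $t=\infty$ analysis gives $8\mid \#\overline{\mathcal{E}}_{a,\infty}(\F_q)$. For each general $t$ with $\sqrt{t^2+1}\in \F_q$, the identity $\sqrt{2(1+a)(t^2+1)}=\sqrt{2(1+a)}\cdot\sqrt{t^2+1}\in \F_q$ makes the $4$-torsion point $\bigl(1,\tfrac{\sqrt{2(1+a)(t^2+1)}}{t}\bigr)$ rational, so $4\mid \#\overline{\mathcal{E}}_{a,t}(\F_q)$; pairing $t$ with $-t$ (canonically isomorphic fibers with equal $\F_q$-counts), each pair contributes $\pm 2\,\#\overline{\mathcal{E}}_{a,t}(\F_q)$ or $0$, in all cases divisible by $8$.

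Adding everything, the trace is congruent to $q+0+(-2q)+0+0=-q\pmod 8$, as claimed. The main technical obstacle is the norm-square step: while the Legendre manipulation itself is short, one needs to correctly recognize $\tfrac{4\pm 2\sqrt{2(1+a)}}{1-a}$ as an element of $\F_{p^2}\setminus \F_p$ (because $\sqrt{2(1+a)}$ is) and apply the ``square iff norm is a square'' criterion at the level of $\F_{p^2}^{\times}$ rather than $\F_p^{\times}$ before the Legendre symbols line up.
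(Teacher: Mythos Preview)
Your proof is correct and follows the same fiber-by-fiber strategy as the paper: read off the rows of the two tables, use the $4$-torsion plus $t\leftrightarrow -t$ pairing to kill the general fibers mod $8$, and invoke case~(d) at $t=\infty$. The one substantive difference is how you dispatch the key step $\sqrt{\tfrac{4\pm 2\sqrt{2(1+a)}}{1-a}}\notin\F_{p^2}$. The paper argues via the quartic $T^4-8T^2+8(1-a)$: if $\alpha=\sqrt{4+2\sqrt{2(1+a)}}$ generated at most a quadratic extension of $\F_p$, the nontrivial Galois element would either negate $\alpha$ (forcing $\sqrt{2(1+a)}\in\F_p$) or send $\alpha$ to $\pm\beta$ (forcing $\alpha\beta=\sqrt{8(1-a)}\in\F_p$), contradicting the hypotheses. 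Your norm criterion is the same computation packaged differently---indeed $N_{\F_{p^2}/\F_p}(\alpha^2)=\alpha^2\beta^2=8(1-a)$ is exactly the product the paper examines---but it is a bit more direct and avoids tracking the Galois action on four roots.
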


\begin{pf}
According to the above discussion, we can see that when $q=p^2$, then $$i, \sqrt{2}, \sqrt{1+a}, \sqrt{1-a}\in \F_q.$$ 
This means 
\begin{enumerate}
    \item The contribution of general fiber is $0$ $(\bmod\ 8)$. 
    \item $t=0$ contributes $q$ to trace. 
    \item If $\sqrt{\frac{4\pm 2\sqrt{2(1+a)}}{1-a}}\in \F_q$, then $t=\pm \sqrt{\frac{1+a}{1-a}}$ contribute $2q$ for trace, otherwise $-2q$. 
    \item The contribution of $t=\infty$ is $0$ $(\bmod\ 8)$. 
\end{enumerate}
To determine whether $\sqrt{\frac{4\pm 2\sqrt{2(1+a)}}{1-a}}\in \F_q$, we need to tell whether $\F_p(\sqrt{{4\pm 2\sqrt{2(1+a)}}})\subset \F_q$. Notice that $\alpha:=\sqrt{4+ 2\sqrt{2(1+a)}}$ is a root of the polynomial $T^4-8T^2+8(1-a)$. In fact, let $\beta=\sqrt{4- 2\sqrt{2(1+a)}}$, then the four roots of this polynomial are $\pm \alpha$ and $\pm \beta$. Now let $\sigma\in \Gal(\F_p(\alpha)/\F_p)$ be a generator, then if $\sigma$ has order $2$, then it either exchanges $\alpha$ with $-\alpha$, or exchanges $\alpha$ with one of $\pm \beta$. For the former, we know that it means $\alpha^2\in \F_p$, i.e. $\sqrt{2(1+a)}\in \F_p$. For the later, we know it means $\alpha\beta\in \F_p$, i.e. $\sqrt{2(1-a)}\in \F_p$. If $\sigma$ has order $1$, i.e. $\alpha\in \F_p$, then $\alpha^2=4-2\sqrt{2(1+a)}\in \F_p$, and thus $\sqrt{2(1+a)}\in \F_p$.  Hence under the assumption of the proposition we know that $\sqrt{\frac{4\pm 2\sqrt{2(1+a)}}{1-a}}\in \F_q$ is not in $\F_q$, and thus
$$
\tr\rho_{\ell} \varepsilon_{\ell}^{-1}(\Frob_{\mathfrak{p}})=q-2q =-q\ \mod 8. 
$$
\end{pf}

\subsection{Proof of Theorem \ref{Thm-con-Tate-conj-}}\label{section-pf-irr-vGT} 
Now we want to prove Theorem \ref{Thm-con-Tate-conj-}, i.e., $\mathcal{S}_{a}$ satisfies the conditions $(\ast)$ and $(\ast\ast)$ of Corollary \ref{Cor-Tate-conj-general}. Recall that 
\[
    H^2_{\rm{\acute{e}t}}((\mathcal{S}_{a})_{\overline{\Q}}, \Q_{\ell}(1))\cong (\NS((\mathcal{S}_{a})_{\overline{\Q}})\otimes \Q_{\ell})\oplus \Tr_{\ell}(\mathcal{S}_{a}).
\]
By the construction of $V_{\ell}$ and $\overline{V}_{\ell}$ in Section 4.2 and Lemma~\ref{Lem-Hodge-num-vGT} (2), we have 
\[
   \Tr_{\ell}(\mathcal{S}_{a})^{\rm{ss}}\subseteq  V_{\ell}(1)^{\rm{ss}}\oplus \overline{V}_{\ell}(1)^{\rm{ss}}\oplus U_{\ell}(1)^{\rm{ss}},
\]
as $\ell$-adic representations of $G_{\Q}$,
where $U_{\ell}(1)^{\rm{ss}}$ is the transcendental part of the $H^2_{\rm{\acute{e}t}}((\mathcal{X}_{a})_{\overline{\Q}}, \Q_{\ell}(1))$. Then $\{U_{\ell}(1)^{\rm{ss}}\}_{\ell}$ is a rank $2$ or $3$ weakly compatible system of $\ell$-adic representations of $G_{\Q}$ defined over $\Q$ since the $K3$ surface $\mathcal{X}_{a}$ has Picard number $19$ or $20$. Moreover, due to the fact that the Tate conjecture is known for $K3$ surface, $U_{\ell}(1)^{\rm{ss}}$ is absolutely irreducible as $G_{\Q}$-representation. 

\begin{comment}
Note that the intermediate surface $\mathcal{X}_{a}$ is a $K3$ surface with Picard rank $19$. This means $\mathcal{X}_{a}$ admits a Shioda-Inose structure (\cite[$\S$~6]{Morrison-K3-SI}) corresponding to two isogenous non-CM elliptic curves $E$ and $E'$. In particular, this means over a finite Galois extension $L/\Q$, we have an isomorphism of $\ell$-adic representations of $G_{L}$
    $$
    U_{\ell}\simeq Sym^2(T_{\ell}(E)).
    $$
    where $Sym^2(T_{\ell}(E))$ is the symmetry square of the Tate module of $E$. So $U_{\ell}$ is irreducible as $G_L$-representation (note that the Tate conjecture is known for abelian surfaces), and hence irreducible as $G_{\Q}$-representation. 
\end{comment}

\begin{Prop}\label{Prop-star}
    For each $a\in \Q\setminus \{\pm 1\}$, the surface $\mathcal{S}_{a}$ satisfies the condition $(\ast)$ of Corollary \ref{Cor-Tate-conj-general}.
\end{Prop}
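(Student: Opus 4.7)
The plan is to verify condition $(\ast)$ directly with $s = 3$ and the three subsystems
\[
\rho_{\ell,1} := V_\ell(1)^{\rm{ss}}, \qquad \rho_{\ell,2} := \overline{V}_\ell(1)^{\rm{ss}}, \qquad \rho_{\ell,3} := U_\ell(1)^{\rm{ss}}.
\]
The required containment has already been established in the paragraph preceding the proposition statement. It then remains to check, for each $i$, that $\{\rho_{\ell,i}\}_\ell$ is a regular rank $2$ or $3$ weakly compatible system of self-dual $\ell$-adic representations of $G_{\Q}$ defined over $\Q$. Since the remark following the cited van Geemen--Top result states $V_\ell \cong \overline{V}_\ell$ as $G_{\Q}$-representations, every property verified for $\{V_\ell(1)^{\rm{ss}}\}_\ell$ transfers verbatim to $\{\overline{V}_\ell(1)^{\rm{ss}}\}_\ell$, so I will restrict attention to the $V$-system and the $U$-system.

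For $\{U_\ell(1)^{\rm{ss}}\}_\ell$, compatibility, $\Q$-rationality, rank $2$ or $3$, and absolute irreducibility have been recorded above the proposition statement. Self-duality will follow from Poincar\'e duality on $H^{2}_{\rm{\acute{e}t}}((\mathcal{X}_a)_{\overline{\Q}}, \Q_\ell)(1)$, which is a $G_{\Q}$-equivariant non-degenerate pairing, together with the standard fact that the algebraic part $\NS((\mathcal{X}_a)_{\overline{\Q}})\otimes\Q_\ell$ and the transcendental part $U_\ell(1)$ are mutually orthogonal under the cup product; the restriction of the pairing to $U_\ell(1)$ is therefore non-degenerate. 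Regularity follows from the K3 Hodge numbers $h^{2,0}(\mathcal{X}_a) = h^{0,2}(\mathcal{X}_a) = 1$: the HT weights of $H^{2}(\mathcal{X}_a)(1)$ are $-1$, $0$, $1$ with multiplicities $1$, $20$, $1$, and after removing the purely weight-$0$ algebraic part of dimension $19$ or $20$ one finds $U_\ell(1)$ with HT weights either $\{-1, 1\}$ or $\{-1, 0, 1\}$, each with multiplicity one.

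For $\{V_\ell(1)^{\rm{ss}}\}_\ell$, self-duality is the quoted van Geemen--Top result, and the rank is $3$ by construction. Compatibility and $\Q$-rationality are inherited from the motivic system $\{W_\ell^{\rm{ss}}\}_\ell$: using $V_\ell \cong \overline{V}_\ell$, at every good prime $p$ one has $\tr(\Frob_p|V_\ell) = \frac{1}{2}\tr(\Frob_p|W_\ell) \in \Q$, and applying the same averaging to the traces on all exterior powers places the entire characteristic polynomial of $\Frob_p$ on $V_\ell$ in $\Q[T]$. For regularity, combine Lemma~\ref{Lem-Hodge-num-vGT} with the Hodge numbers of $\mathcal{X}_a$ and the purely $(1,1)$-type of $A_\ell(\mathcal{S}_a)$ to obtain $h^{2,0}(W_\ell) = h^{0,2}(W_\ell) = 2$; since $\dim W_\ell = 6$, this also forces $h^{1,1}(W_\ell) = 2$. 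Thus $W_\ell(1)$ has HT weights $\{-1, 0, 1\}$ each with multiplicity $2$, and the splitting $W_\ell = V_\ell \oplus \overline{V}_\ell$ together with $V_\ell \cong \overline{V}_\ell$ halves each multiplicity to give $V_\ell(1)$ HT weights $\{-1, 0, 1\}$ each with multiplicity one.

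The main technical point is the Hodge-number computation for $W_\ell$ combined with the use of $V_\ell \cong \overline{V}_\ell$ to control the HT weights of the eigenspace $V_\ell(1)$; the remaining ingredients are routine consequences of Poincar\'e duality, motivic compatibility, and facts already recorded in Section~\ref{section-surface-vGT}.
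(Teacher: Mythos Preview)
Your proof is correct and follows essentially the same approach as the paper: the same three subsystems $V_\ell(1)^{\rm ss}$, $\overline{V}_\ell(1)^{\rm ss}$, $U_\ell(1)^{\rm ss}$, with the same containment already recorded before the proposition. The only difference is one of detail: the paper obtains the Hodge numbers $h^{2,0}=h^{1,1}=h^{0,2}=1$ of $V_\ell$ by direct citation to the proof of \cite[Proposition~4.2]{GT-selfdual} and handles compatibility by the phrase ``motivically defined'', whereas you reconstruct these facts by computing the Hodge numbers of $W_\ell$ and halving via $V_\ell\cong\overline{V}_\ell$, and you give an explicit averaging argument for $\Q$-rationality of the characteristic polynomials. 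Both routes are valid; yours is more self-contained, the paper's is shorter.
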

\begin{proof}
By the above discussion, $\{U_{\ell}(1)\}_{\ell}$ is a rank $2$ or $3$ regular weakly compatible system of self-dual $\ell$-adic representations of $G_{\Q}$ defined over $\Q$. 

Considering the representations $V_{\ell}$ and $\overline{V}_{\ell}$. They are motivically defined, and the complex Hodge number are $h^{2,0}=h^{1,1}=h^{0,2}=1$  (see \cite[proof of Proposition~4.2]{GT-selfdual}). So $\{V_{\ell}(1)^{\rm{ss}}\}_{\ell}$, and $\{\overline{V}_{\ell}(1)^{\rm{ss}}\}_{\ell}$ are also rank $3$ self-dual regular weakly compatible system of $\ell$-adic representations of $G_{\Q}$ defined over $\Q$.
\end{proof}

Before we consider the condition $(\ast\ast)$ of Corollary \ref{Cor-Tate-conj-general}, we need to state a lemma for later use. 
    
    \begin{Lem}\label{Lem-same-det}
        Let $\rho: G_{\Q}\rightarrow\GL_{3}(\overline{\Q}_{\ell})$ be an self-dual $\ell$-adic representation and $\rho\cong \psi\oplus r$ decomposes into the direct sum of two irreducible $\overline{\Q}_{\ell}$-subrepresentations with $\dim \psi=1$ and $\dim r=2$. Then, for an element $g$, $\det r(g)=1$ in any one of the following cases:
        \begin{enumerate}
            \item[(a)] $\tr \rho(g^2)\neq 3 \ (\bmod\ m)$ for some integer $m\geq 5$. 
            \item[(b)] $\tr\rho(g)\neq \pm 1$. 
        \end{enumerate}
    \end{Lem}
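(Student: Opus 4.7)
The plan is to argue by contradiction: assuming $\det r(g)=-1$, I would derive the exact values $\tr\rho(g)\in\{\pm 1\}$ and $\tr\rho(g^2)=3$ as elements of $\overline{\Q}_{\ell}$, which immediately contradict hypothesis~(b) and (since $3\equiv 3\pmod{m}$ for every $m$) hypothesis~(a) respectively.

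The first step is to exploit the self-duality of $\rho$. Because $\dim\psi=1\neq 2=\dim r$, the isomorphism $\rho\cong\rho^{*}$, combined with the isotypic decomposition of the semisimple $\rho=\psi\oplus r$, forces $\psi\cong\psi^{*}$ and $r\cong r^{*}$ separately. In particular $\psi$ is a quadratic character, so $\psi(g)\in\{\pm 1\}$ and $\psi(g^2)=1$, and $\det r$ is a quadratic character, so $\det r(g)\in\{\pm 1\}$ to begin with.

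The core calculation is a short manipulation of traces. For any $2$-dimensional representation one has $\tr r(g^{-1})=\tr r(g)/\det r(g)$, while self-duality $r\cong r^{*}$ gives $\tr r(g^{-1})=\tr r(g)$. Comparing these yields $\tr r(g)\,(\det r(g)-1)=0$, so the assumption $\det r(g)=-1$ forces $\tr r(g)=0$. The eigenvalues of $r(g)$ are then the roots of $x^{2}-1$, namely $\pm 1$, and thus $\tr r(g^2)=2$. Assembling the contributions from both summands of $\rho$ gives $\tr\rho(g)=\psi(g)+0\in\{\pm 1\}$ and $\tr\rho(g^2)=\psi(g)^{2}+2=3$, exactly the two equalities that were to be contradicted.

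There is no substantial obstacle here: the argument is a direct consequence of the two constraints ``$r$ is self-dual'' and ``$\dim r=2$''. The one point worth being careful about is the first step, where one needs uniqueness of the isotypic decomposition (which is available since $\rho=\psi\oplus r$ is already written as a direct sum of irreducibles of distinct dimensions) in order to isolate $r\cong r^{*}$ from $\rho\cong\rho^{*}$; once this is in hand, the remainder reduces to the eigenvalue bookkeeping above.
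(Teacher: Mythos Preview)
Your proof is correct and follows essentially the same strategy as the paper's: both argue the contrapositive by showing that $\det r(g)=-1$ forces the eigenvalues of $r(g)$ to be $\{+1,-1\}$, whence $\tr\rho(g)\in\{\pm 1\}$ and $\tr\rho(g^{2})=3$. The only cosmetic difference is that the paper reaches this via a case analysis on all three eigenvalues of $\rho(g)$ (using that the image of a self-dual $\rho$ lands in $\GO_{3}$, hence $\rho(g)$ is diagonalizable), whereas you work directly with $r$ via the $2\times 2$ identity $\tr r(g^{-1})=\tr r(g)/\det r(g)$.
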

    \begin{proof}
    Since $\rho$ is self-dual, $
    \psi\oplus r\cong \rho\cong \rho^{\ast}\cong \psi^{\ast}\oplus r^{\ast}$. By Jordan-Holder Theorem, $\psi\cong \psi^{\ast}$ and $r\cong r^{\ast}$, i.e., $\psi$ and $r$ are self-dual. In particular, $\psi$ and $\det r$ are quadratic characters.
    
    Also notice that, since $\rho$ is self-dual, its image is in the orthogonal group $O_{3}(\overline{\Q}_{\ell})$. Then, for any $g\in G_{\Q}$, $\rho (g)$ is a diagonalizable matrix. Assume that the Jordan canonical form of $\rho (g)$ is $\left(\begin{smallmatrix}
    \alpha & 0 & 0 \\
    0 & \beta & 0 \\
    0 & 0 & \gamma
    \end{smallmatrix}\right)$. Since $\rho(g)$ is similar to $(\rho(g)^{-1})^{t}$. Then there are two cases.
\begin{enumerate}
    \item $\alpha=\pm 1$, $\beta =\pm 1$, and $\gamma =\pm 1$.
    \item $\alpha\beta =1 (\alpha\not=\pm 1)$, and $\gamma =\pm 1 $.
\end{enumerate} 

Observe that, in case $(2)$, $\det r(g)=\alpha \beta =1$ since $\psi$ is a quadratic character. And, in case (1), if $\alpha=\beta=\gamma=\pm 1$, $\det r(g)=1$. Then this lemma follows.
    \end{proof}

\begin{Prop}\label{Prop-stars}
    For each $a\in \Q$, if $a\equiv 2,3 \mod 5$, and none of $2(1+a)$ or $2(1-a)$ is a square in $\Q$, the surface $\mathcal{S}_{a}$ satisfies the condition $(\ast\ast)$ of Corollary \ref{Cor-Tate-conj-general}.
\end{Prop}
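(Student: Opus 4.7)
By Proposition~\ref{Prop-star}, condition $(\ast)$ is witnessed by $\{V_\ell(1)^{\rm{ss}}\}$, $\{\overline{V}_\ell(1)^{\rm{ss}}\}$, and $\{U_\ell(1)^{\rm{ss}}\}$. The $K3$-piece $U_\ell(1)^{\rm{ss}}$ is absolutely irreducible (Tate conjecture for $K3$ surfaces), so $(\ast\ast)$ is vacuous there; and $V_\ell(1)^{\rm{ss}}\cong\overline{V}_\ell(1)^{\rm{ss}}$. So the task is: assuming $V_\ell(1)^{\rm{ss}}\cong\psi_\ell\oplus r_\ell$ with $r_\ell$ absolutely irreducible of dimension $2$, prove $\det r_\ell=1$. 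Self-duality and Jordan--H\"older make $r_\ell$ self-dual, so by class field theory $\det r_\ell$ is a quadratic character $\chi_D=(\tfrac{D}{\cdot})$.

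Set $\chi_1=(\tfrac{2(1+a)}{\cdot})$, $\chi_2=(\tfrac{2(1-a)}{\cdot})$, and $S=\{p\nmid 2\ell(1-a^2):\chi_1(\Frob_p)=\chi_2(\Frob_p)=-1\}$, a set of positive Dirichlet density. For $p\in S$, Proposition~\ref{Prop-tr-mod8-p^2} gives $\tr V_\ell(1)^{\rm{ss}}(\Frob_p^2)\equiv-1\pmod 8$, and the purity bound $|\tr V_\ell(1)^{\rm{ss}}(\Frob_p^2)|\le 3$ pins this trace to exactly $-1$. Since $-1\neq 3$, Lemma~\ref{Lem-same-det}(a) yields $\det r_\ell(\Frob_p)=1$ for all $p\in S$. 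Under the hypotheses that $2(1\pm a)$ and $1-a^2\equiv 2\pmod 5$ are all non-squares in $\Q$, the characters $\chi_1,\chi_2,\chi_1\chi_2$ are independent and nontrivial; a Chebotarev independence argument inside $\Q(\sqrt{2(1+a)},\sqrt{2(1-a)},\sqrt{D})$ combined with $\chi_D|_S=1$ forces $\chi_D\in\{1,\chi_1,\chi_2,\chi_1\chi_2\}$, and direct evaluation on $S$ rules out $\chi_1$ and $\chi_2$ (both equal $-1$ on $S$). Thus $\chi_D\in\{1,\chi_1\chi_2\}$.

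To rule out $\chi_D=\chi_1\chi_2$, use $a\equiv 2,3\pmod 5\Longrightarrow 1-a^2\equiv 2\pmod 5$, so $\chi_1\chi_2(\Frob_5)=(\tfrac{2}{5})=-1$. If $\chi_D=\chi_1\chi_2$ then $\det r_\ell(\Frob_5)=-1$, which by Lemma~\ref{Lem-same-det} would force the eigenvalues of $V_\ell(1)^{\rm{ss}}(\Frob_5)$ to be of the form $(1,1,-1)$ or $(-1,-1,1)$, in particular $\tr V_\ell(1)^{\rm{ss}}(\Frob_5)\in\{\pm 1\}$ and $\tr V_\ell(1)^{\rm{ss}}(\Frob_5^2)=3$. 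I derive a contradiction by computing $\tr V_\ell^{\rm{ss}}(\Frob_5)$ directly via the fiberwise method of Section~\ref{section-Trace-Vl} at $q=5$. The hypothesis $a\equiv 2,3\pmod 5$ controls the $\F_5$-squareness pattern: $(1+a)/(1-a)\equiv 2,3\pmod 5$ is a non-square, so the fibers at $t=\pm\sqrt{(1+a)/(1-a)}$ are not $\F_5$-rational and contribute zero; general $t\in\F_5\setminus\{0,\pm i\}$ have $t^2+1\equiv 2\pmod 5$ non-square, so their pulled-back fibers are not $\F_5$-rational and contribute zero; $t=\pm i$ contribute zero. This leaves only $t=0$ (via Table~\ref{Tab-0-fiber}) and $t=\infty$ (a direct elliptic curve point count on $Y^2=X(X^2+2aX+1)$ over $\F_5$). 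The resulting explicit value of $\tr V_\ell(1)^{\rm{ss}}(\Frob_5)=\tr V_\ell^{\rm{ss}}(\Frob_5)/5$, once it is shown to be distinct from $\pm 1$, immediately contradicts $\det r_\ell(\Frob_5)=-1$ via Lemma~\ref{Lem-same-det}(b); in any residual case where the value equals $\pm 1$, a parallel mod-$8$ calculation at $q=25$ combined with the self-dual eigenvalue structure $\{\mu,\mu^{-1},\pm 1\}$ of $V_\ell(1)^{\rm{ss}}(\Frob_5)$ constrains the eigenvalues tightly enough to exclude the pattern required by the bad case. Hence $\chi_D=1$, completing the verification of $(\ast\ast)$.

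\textbf{Main obstacle.} The principal challenge is the final step. The mod-$8$ strategy of Proposition~\ref{Prop-tr-mod8-p^2} is tailored to primes in $S$; at primes where $\chi_1\chi_2(\Frob_p)=-1$ the same counting only yields $\tr V_\ell^{\rm{ss}}(\Frob_p^2)\equiv 3p^2\pmod 8$, which combined with purity is compatible with $\tr V_\ell(1)^{\rm{ss}}(\Frob_p^2)=3$, and so fails, modularly, to distinguish $\chi_D=1$ from $\chi_D=\chi_1\chi_2$. The mod-$5$ hypothesis on $a$ is precisely what makes the fiber-by-fiber count at $p=5$ sparse and explicit, bridging the gap between the Chebotarev reduction and the exclusion of $\chi_1\chi_2$.
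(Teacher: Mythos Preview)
Your plan follows the paper's proof essentially step for step: first use Proposition~\ref{Prop-tr-mod8-p^2} together with Lemma~\ref{Lem-same-det}(a) to force $\chi_D\in\{1,\chi_1\chi_2\}$ (the paper phrases this via CRT rather than Chebotarev, but the content is identical), and then eliminate $\chi_1\chi_2$ by an explicit trace computation at $p=5$ using Lemma~\ref{Lem-same-det}(b).

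Two small points. First, the purity step ``pins this trace to exactly $-1$'' is both unnecessary and not quite justified as stated: the congruence $\equiv -1\pmod 8$ from Proposition~\ref{Prop-tr-mod8-p^2} is for the \emph{integer} $\tr(\rho_\ell\varepsilon_\ell^{-1})(\Frob_p^2)$, not for $\tr\rho_\ell(\Frob_p^2)$ itself, and the latter need not be an integer a priori. The paper avoids this by observing directly that $\det r_\ell(\Frob_p)=-1$ would force $\tr(\rho_\ell\varepsilon_\ell^{-1})(\Frob_p^2)=3p^2\equiv 3\pmod 8$, contradicting $\equiv -1$. Second, your fallback ``parallel mod-$8$ calculation at $q=25$'' is never needed: the paper carries out the fiberwise count at $p=5$ explicitly and obtains $\tr(\rho_\ell\varepsilon_\ell^{-1})(\Frob_5)=3$ for $a\equiv 2$ and $=1$ for $a\equiv 3$, so $\tr\rho_\ell(\Frob_5)\in\{3/5,1/5\}$, neither equal to $\pm 1$, and Lemma~\ref{Lem-same-det}(b) finishes the argument outright. (Note also that $p=5$ does not lie in $S$ under the hypothesis $a\equiv 2,3\pmod 5$, so Proposition~\ref{Prop-tr-mod8-p^2} would not apply there anyway.)
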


We denote $V_{\ell}(1)$ by $\rho_{\ell}$ for any prime $\ell$. By Proposition 4.2, $\rho_{\ell}^{\rm{ss}}$ is self-dual, If $\rho_{\ell}^{\rm{ss}}$ is decomposed into irreducible $\overline{\Q}_{\ell}$-subrepresentations as follows
\[
\rho_{\ell}^{\rm{ss}}\cong \psi_{\ell}\oplus r_{\ell}
\]
with $\dim\psi_{\ell}=1$ and $\dim r_{\ell}=2$. We want to prove that $\det r_{\ell}=1$. 
\begin{proof}
By the proof of Lemma~\ref{Lem-same-det}, we know that $\det r_{\ell}$ is a quadratic character. Thus there is an integer $D$ such that
\[
\det r_{\ell}(\Frob_{p}) =\left(\frac{D}{p}\right)
\]
for prime $p\nmid D$.

We first prove that, if neither $2(1+a)$ nor $2(1-a)$ is a square in $\Q$, then $D$ is 1 or  $1-a^2$ (up to a square). Suppose that this is not the case, then by Chinese reminder theorem we can find a prime integer $p$ such that 
    $$
    \left(\frac{D}{p}\right)=-1, \quad \left(\frac{2(1+a)}{p}\right)=\left(\frac{2(1-a)}{p}\right)=-1.
    $$
    So $\det r_{\ell}(\Frob_{p})=\left(\frac{D}{p}\right)=-1$. On the other hand, by Proposition \ref{Prop-tr-mod8-p^2}, $\left(\frac{2(1+a)}{p}\right)=\left(\frac{2(1-a)}{p}\right)=-1$ implies that $\tr \rho_{\ell}(\Frob_p^2)=-1\ (\bmod\ 8)$. Then according to Lemma \ref{Lem-same-det}, we have $\det r_{\ell}(\Frob_p)=1$. This is a contradiction. So $D$ is $1$ or $1-a^2$ (up to a square). 
    
    Secondly, we want to show that, if $a\equiv 2,3\  (\bmod 5)$ and neither $2(1+a)$ nor $2(1-a)$ is a square in $\Q$, $D$ is $1$ up to a square. Suppose not, then $\det r$ has to be $\left(\frac{1-a^2}{\bullet}\right)$. Now let $p=5$, then $1-a^2=2\in \F_p$, hence $\det r(\Frob_{5})=\left(\frac{1-a^2}{p}\right)=-1$. Then by part (2) of Lemma \ref{Lem-same-det}, we know that $\tr \rho_{\ell}(\Frob_5)$ has to be $\pm 1$, or equivalently, $\tr\rho_{\ell}\varepsilon_{\ell}^{-1}(\Frob_5)=\pm 5$. In the following, we will use the trace formula \eqref{eqn-trace-formula} to show that this is impossible by calculation. 
    
    Notice that over $\F_5$, the general fiber (cf. Lemma \ref{lem-goe-fiber}) are $t=1,4$. None of them contribute to the trace $\tr \rho_{\ell}\varepsilon_{\ell}^{-1}(\Frob_5)$ since $t^2+1=2\in \F_5$ is not a square. So we only need to consider the singular fiber $t=0$ and $\infty$. Suppose $a\equiv2\ (\bmod\ 5)$, then by Table \ref{Tab-0-fiber} we know that the fiber $t=0$ contributes $-5$ to $\tr\rho_{\ell}\varepsilon_{\ell}^{-1}(\Frob_5)$. By point counting, we know that the fiber $t=\infty$ contributes $8$ to $\tr\rho_{\ell}\varepsilon_{\ell}^{-1}(\Frob_5)$. Hence now we have 
    $$
    \tr\rho_{\ell}\varepsilon_{\ell}^{-1}(\Frob_5)=8-5=3\neq \pm 5. 
    $$ Similarly, suppose $a=3$, then the fiber $t=0$ contributes $-7$, and $t=\infty$ contributes $8$ to $\tr\rho_{\ell}(\Frob_5)$. Hence 
    $$
    \tr\rho_{\ell}\varepsilon_{\ell}^{-1}(\Frob_5)=8-7=1\neq \pm 5. 
    $$
    By all above, one sees that for each $a=2$ or $3\ (\bmod\ 5)$, we have $\tr\rho_{\ell}\varepsilon_{\ell}^{-1}(\Frob_p)\neq \pm 5$, hence we obtain a contradiction. So we are done. 
\end{proof}

\begin{Rmk}\label{Rmk-phi-trivial-mod-7} 
In fact, $\left(\frac{a}{5}\right)=-1$ in Theorem~\ref{Thm-con-Tate-conj-} is only a technical condition and seems easy to generalize. For instance one can also show that  if $a=3$ or $4$ $(\bmod\ 7)$, then $\det r_{\ell}$ is trivial. 
\end{Rmk}

Now combine all the results above, we are able to give a proof to Theorem \ref{Thm-con-Tate-conj-}.

\begin{pf}[Proof of Theorem~\ref{Thm-con-Tate-conj-}]
By Propositions \ref{Prop-star} and \ref{Prop-stars}, the surface $\mathcal{S}_{a}$ satisfies all the conditions of Corollary \ref{Cor-Tate-conj-general}. Then by Corollary~\ref{Cor-Tate-conj-general}, for a Dirichlet density one subset of primes $\ell$, the corresponding Tate conjecture for $\mathcal{S}_{a}$ is true.
\end{pf}

\section*{Acknowledgements}
Our project is inspired by the work of Bert van Geemen and Jaap Top on construction of 3-dimensional Galois representations. We thank Bert van Geemen and Frank Calegari for valuable feedback. The first author would like to thank Siman Wong for introducing the question about irreducibility of Galois representations, which leads us to this project. The second author would like to thank Joel Specter and Dingxin Zhang for helpful discussions on the Galois representations and pure motives. We would also like to thank Jeff Achter, Yuan Ren, David Savitt, Shaoyun Yi, for helpful conversations on this project. We are grateful to the anonymous referees for their helpful comments.

%End of Article
\bibliographystyle{amsalpha}
\bibliography{mybib}

\end{document}